\renewcommand{\mathcal}{\mathscr}
\renewcommand{\epsilon}{\varepsilon}
\renewcommand{\phi}{\varphi}
\numberwithin{equation}{section}
\theoremstyle{plain}
\newtheorem{theorem}{Theorem}[section]
\newtheorem{lemma}[theorem]{Lemma}
\newtheorem{remark}[theorem]{Remark}
\newtheorem{proposition}[theorem]{Proposition}
\providecommand{\floor}[1]{\lfloor#1\rfloor}
\providecommand{\ceil}[1]{\lceil#1\rceil}
\providecommand{\inner}[2]{\langle#1,#2\rangle}
\newcommand{\bp}{{\bf p}}
\newcommand{\bi}{{\overline \imath}}
\newcommand{\bk}{{\overline k}}
\newcommand{\bq}{{\overline q}}
\newcommand{\bj}{{\overline \jmath}}
\newcommand{\R}{\mathbb R}
\newcommand{\N}{\mathbb N}
\DeclareMathOperator{\Id}{Id}
\DeclareMathOperator{\diag}{diag}
\DeclareMathOperator{\fq}{freq}
\DeclareMathOperator{\freq}{freq}
\title{\textbf{Dimension of generic self-affine sets with holes}}
\author{Henna Koivusalo}
\address{University of Vienna\\ Oskar Morgensternplatz 1\\ 1090 Vienna, Austria}\email{henna.koivusalo@univie.ac.at}
\author{Micha\l\ Rams}
\address{Micha\l\ Rams\\Institute of Mathematics\\ Polish
Academy of Sciences\\ ul.
\'Sniadeckich 8, 00-656 Warszawa, Poland }\email{rams@impan.pl}
\date{\today}
\begin{document}

\thispagestyle{empty}


\begin{abstract}
Let $(\Sigma, \sigma)$ be a dynamical system, and let $U\subset \Sigma$. Consider the survivor set
\[
\Sigma_U=\{x\in \Sigma\mid \sigma^n(x)\notin U\textrm{ for all }n\}
\]
of points that never enter the subset $U$. We study the size of this set in the case when $\Sigma$ is the symbolic space associated to a self-affine set $\Lambda$, calculating the dimension of the projection of $\Sigma_U$ as a subset of $\Lambda$ and finding an asymptotic formula for the dimension in terms of the K\"aenm\"aki measure of the hole as the hole shrinks to a point. Our results hold when the set $U$ is a cylinder set in two cases: when the matrices defining $\Lambda$ are diagonal; and when they are such that the pressure is differentiable at its zero point, and the K\"aenm\"aki measure is a strong-Gibbs measure. 
\end{abstract}
\thanks{This project was supported by OeAD grant number PL03/2017. M.R. was supported by National Science Centre grant 2014/13/B/ST1/01033 (Poland).}

\maketitle

\section{Introduction}

Study of dynamical systems with holes begins from the question of \cite{PianigianiYorke79}: Assume you are playing billiards on table where trajectories of balls are unstable with respect to the initial conditions, and assume further, that a hole big enough for a ball to fall through is cut off the table. What is the asymptotic behaviour of the probability that at time $t$ a generic ball is inside some measurable set on the table, given that it is still on the table after time $t$? This and related questions have been studied in many dynamical systems, see \cite{ColletMartinezSchmitt94, ChernovMarkarianTroubetzkoy00, Demers05, BruinDemersMelbourne10} to name only few of many.

We will focus on a related problem of studying the set of those points that never enter the hole. To put this in rigorous terms, consider a continuous dynamical system $T: \Lambda\to \Lambda$ with a hole, the hole being an open subset $U\subset \Lambda$. Assume further, that there is an ergodic measure $\mu$ on $(T, \Lambda)$. How large is the survivor set,
\[
\Lambda_U=\{x\in \Lambda\mid T^n(x)\notin U\textrm{ for any }n\}?
\]
By Poincar\'e's recurrence theorem, this set will be of zero $\mu$-measure. Assuming that $\Lambda$ is a space where the notions of box-counting or Hausdorff dimension can be defined, we can continue by asking about the size of the survivor set in terms of its dimension. This set has also been studied in several contexts \cite{Urbanski89, LiveraniMaume03}, and in fact it turns out that, for example, the set of badly approximable points in Diophantine approximation can be written in terms of survivor sets under the iteration by the Gauss map \cite{Hensley92}.

The asymptotic speed at which the measure $\mu$ of the system escapes through the hole $U$ is the escape rate
\[
r_\mu(U)=-\lim_{n\to \infty}\tfrac 1n \log \mu\{x\in \Lambda\mid T^i(x)\notin U \textrm{ for }i<n\}
\]
(when the limit exists). In many systems the escape rate can be described in terms of the $\mu$-measure of the hole. In particular, often the escape rate and measure of the hole can also be used to quantify the asymptotic rate of decrease of the dimension deficit; that is, speed at which the dimension of the system with a hole approaches the dimension of the full system \cite{KellerLiverani09, BunimovichYurchenko11, FergusonPollicott12, Dettmann13}.

Recently, some interest has arisen in studying classical dynamical problems on self-affine fractal sets, under the dynamics that naturally arises from the definition of the set via an iterated function system \cite{KoivusaloRamirez, BaranyRams, FergusonJordanRams15}. (For definitions, see Section \ref{sec:setup}.) This is an interesting example  to consider since this dynamical system has an easy symbolic representation in terms of a full shift space, the dynamics of which is generally very well understood. In the presence of a separation condition the shift space is in fact conjugate to the dynamical system on the fractal set. However, in the affine case this dynamical system is not conformal. This means that a lot of the standard methodology cannot be carried through -- for example, the natural geometric potential is not in general multiplicative or commutative, and the dimension maximizing measure is not necessarily a Gibbs measure. In this article, as Theorems \ref{thm:diagonal} and \ref{thm:homogeneous}, we work out the asymptotic rate of decrease for the dimension deficit, for some classes of self-affine sets. As is to be expected from the historical point of view, the deficit is comparable to the measure of the hole, up to a constant which we quantify explicitly when possible.  Our proofs work in the case when the iterated function system consists of diagonal matrices (Theorem \ref{thm:diagonal}, for a simpler corollary see Theorem \ref{thm:main}) and in the case when the pressure corresponding to the iterated function system has a derivative at its zero point, and the K\"aenm\"aki measure is a strong-Gibbs measure (Theorem \ref{thm:homogeneous}, for definitions see Section \ref{sec:setup}). 

\section{Problem set-up and notation}\label{sec:setup}

Let $\{A_1,\dots, A_k\}$ be a finite set of contracting non-singular $d\times d$ matrices, and let $(v_1,\dots,v_k)\in\R^d$. Consider $\{f_1, \dots, f_k\}$, the iterated function system (IFS) of the affine mappings $f_i:\R^d\to \R^d, f_i(x)=A_i(x)+v_i$ for $i=1, \dots, k$. It is a well known fact that there exists a unique non-empty compact subset $\Lambda$ of $\R^d$ such that
\begin{equation}\label{eq:definv}
\Lambda=\bigcup_{i=1}^kf_i(\Lambda).
\end{equation}

This set has a description in terms of the shift space. Let $\Sigma$ be the set of one-sided words of symbols $\left\{1,\dots,k\right\}$ with infinite length, i.e.\ $\Sigma=\left\{1,\dots,k\right\}^{\N}$, and $\Sigma_n=\{1, \dots, k\}^n$. Let us denote the left-shift operator on $\Sigma$ by $\sigma$. When applied to a finite word $\bi\in \Sigma_n$, $\sigma(\bi)=i_2 \dots i_n$, the word of shorter length with the first digit deleted. Let the set of words with finite length be $\Sigma^*=\bigcup_{n=0}^{\infty}\Sigma_n$ with the convention that the only word of length $0$ is the empty word. Denote the length of $\bi \in \Sigma^*$ by $|\bi|$, and for finite or infinite words $\bi, \bj$, let $\bi\wedge\bj$ denote their joint beginning. If $\bi$ can be written as $\bi=\bj\bk$ for some finite or infinite word $\bk$, denote $\bj<\bi$. We define the cylinder sets of $\Sigma$ in the usual way, that is, by setting $[\bi]=\left\{\bj\in\Sigma: \bi<\bj\right \}$ for $\bi\in \Sigma^*$. For a word $\bi=(i_1,\dots,i_n)$ with finite length let $f_{\bi}$ be the composition $f_{i_1}\circ\cdots\circ f_{i_n}$ and $A_{\bi}$ be the product $A_{i_1}\cdots A_{i_n}$. For $\bi\in\Sigma^*\cup \Sigma$, denote by $\bi|_n$ the first $n$ symbols of $\bi$, i.e. $\bi|_n=(i_1,\dots,i_n)$. We define $\bi|_0=\emptyset$, $A_{\emptyset}=\Id$, the identity matrix, and $f_{\emptyset}=\Id$, the identity function.

We define a \textit{natural projection} $\pi:\Sigma \to \Lambda$ by
\[
\pi(\bi)=\sum_{k=1}^{\infty}A_{\bi|k-1}v_{i_k},
\]
and note that $\Lambda=\cup_{\bi\in \Sigma}\pi(\bi)$.

Denote by $\sigma_i(A)$ the $i$-th {\it singular value} of a matrix $A$, i.e. the positive square root of the $i$-th eigenvalue of $AA^*$, where $A^*$ is the transpose of $A$. We note that $\sigma_1(A)=\|A\|$, and $\sigma_2(A)=\|A^{-1}\|^{-1}$, where $\|\cdot\|$ is the usual matrix norm induced by the Euclidean norm on $\R^d$. Moreover, $|\sigma_1(A)\cdots\sigma_d(A)|=|\det A|$. For $s\ge 0$ define the {\it singular value function} $\phi^s$ as follows
\begin{equation}\label{esvf}
\phi^s(A):=\sigma_1\cdots \sigma_{\ceil{s}}^{s-\floor{s}},
\end{equation}
where $\ceil{\cdot}$ and $\floor{\cdot}$ are the ceiling and floor function. Further, for an affine IFS, define the {\it pressure function}
\begin{equation}\label{eq:pressure}
P(s)=\lim_{n\to \infty}\frac 1n\log \sum_{\bi\in \Sigma_n}\phi^s(A_\bi).
\end{equation}
When it is necessary to make the distinction, we will write $P(s, (A_1, \dots, A_k))$. Given a measure $\nu$ on $\Sigma$, we define the {\it entropy}
\[
h_\nu=-\lim_{n\to\infty} \frac1n \sum_{i\in \Sigma_n}\nu[\bi]\log\nu[\bi],
\]
and {\it energy}
\[
E_\nu(t)=\lim_{n\to \infty}\frac 1n\sum_{\bi\in \Sigma_n}\nu[\bi]\log\phi^t(A_\bi).
\]
By a result of K\"aenm\"aki \cite{Kaenmaki04}, for all $s\ge 0$ {\it equilibrium} or \emph{K\"aenm\"aki measures} exist, that is, for all $s$ there is a measure $\mu=\mu(s)$ on $\Sigma$ such that
\[
P(s)=E_\mu(s)+h_\mu.
\]
A classical result of Falconer \cite{Falconer88} (see also \cite{Solomyak98}) asserts that when $\|A_i\|<1/2$, for almost all $(v_1, \dots, v_k)\in \R^{dk}$, the dimension of the self-affine set $\Lambda$ is given by the $s$ for which $P(s)=0$ (or $d$ if the number $s$ is greater than $d$), and K\"aenm\"aki proves that $\dim \Lambda=\dim \mu$ for the equilibrium measure at this value of $s$. 

We will need the notion of a {\it Bernoulli measure}, that is, given a probability vector $(p_1, \dots, p_k)$ the Bernoulli measure $\bp$ is the probability measure on $\Sigma$ giving the weight $p_\bi=p_{i_1}\cdots p_{i_n}$ to the cylinder $[\bi]$. We will also need the notion of an \emph{$s$-semiconformal measure}, that is, a measure $\mu$ for which constants $0<c\le C<\infty$ exist such that for all $\bi\in \Sigma^*$,
\[
ce^{-|\bi|P(s)}\phi^s(A_\bi)\le \mu([\bi])\le Ce^{-|\bi|P(s)}\phi^s(A_\bi).
\]
In this terminology we are following \cite{KaenmakiVilppolainen10}, where the existence of such measures for an affine iterated function system is investigated. 
We call an $s$-semiconformal measure $\mu$ a \emph{strong-Gibbs measure}, if it is both $s$-semiconformal and also a Gibbs measure for some multiplicative potential. 
We now define the survivor sets we will be interested in. Fix some $\bq\in \Sigma_q$ and let $U=[\bq]$. In the symbolic space $\Sigma$ we define the \emph{survivor set }as
\[
\Sigma_U=\{\bi\in \Sigma\mid \sigma^i(\bi)\notin U\textrm{ for all }i\}.
\]
Whenever it is the case that $f_i(\Lambda)\cap f_j(\Lambda)=\emptyset$ for $i\neq j$, then it is possible to define a dynamical system $T:\Lambda\to \Lambda$ such that for $x\in f_i(\Lambda)$ we let $T(x)=f_i^{-1}(x)$. In this case it is also true that the projection map $\pi$ is a bijection, and the dynamical system $(\Lambda, T)$ is conjugate to the full shift $(\Sigma, \sigma)$, that is, $\pi\circ\sigma=T\circ \pi$. Hence in this case the survivor sets in the symbolic space $(\Sigma, \sigma)$ and on the fractal $(\Lambda, T)$ correspond to each other, that is,
\[
\pi(\Sigma_{[\bq]})=\{x\in \Lambda \mid T^i(x)\notin \pi([\bq])\textrm{  for all }i\}.
\]
This is why we define, also in the general situation, the \emph{survivor set }on $\Lambda$ to be $\Lambda_{\pi[\bq]}=\pi(\Sigma_{[\bq]})$. In the following we will be interested in the dimension of the set $\pi(\Sigma_{[\bq]})$, regardless of whether or not the projection $\pi$ is bijective and the dynamics $T$ well-defined.

We can now formulate our main theorems concerning the Hausdorff dimension of the survivor set. In the following the point $\bq$ will be fixed and it will cause no danger of misunderstanding to denote, $\Lambda_{\pi[\bq|_q]}=\Lambda_q$, where $q$ is a positive integer. In Section \ref{sec:diagonal}, as Theorem \ref{thm:diagonal}, we prove a statement for diagonal matrices. However, the formulation of the theorem in the diagonal case requires technical notation that we want to postpone introducing. For the full statement we refer the reader to Theorem \ref{thm:diagonal}, here we only give the special case where the diagonal elements are in the same order.
\begin{theorem}\label{thm:main}
Let $\Lambda$ be a self-affine set corresponding to an iterated function system $\{A_1+v_1, \dots, A_k+v_k\}$ with $\|A_i\|<\tfrac 12$ for all $i=1, \dots, k$, and let $\bq\in \Sigma$. Assume that $A_i=\diag(a^i_1, \dots, a^i_d)$ are diagonal for all $i=1, \dots, k$, and, furthermore, that the diagonal elements are in the same increasing order $a^i_1\leq \ldots \leq a^i_d$ in all of the matrices. Denote by $\mu$ the K\"aenm\"aki measure for the value of $s$ for which $P(s)=0$. Then, for Lebesgue almost all $(v_1, \dots, v_k)\in \R^{dk}$,
\[
\lim_{q\to \infty}\frac{\dim\Lambda - \dim\Lambda_{q}}{\mu[\bq|_q]}=\begin{cases}
\frac{1}{Z}, & \bq\textrm{ is not periodic }\\
 \frac{1 - \mu[\bq|_\ell]}{Z}, & \bq \textrm{ is periodic with period }\ell,
\end{cases}
\]
where the explicit constant $Z$, which only depends on the diagonal elements of the matrices $A_i$, is defined in Remark \ref{rem:exs2}.
\end{theorem}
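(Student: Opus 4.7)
The plan is to realise $\dim \Lambda_q$ as the zero of a modified pressure function for the survivor subshift, and then Taylor-expand to reduce the statement to an escape-rate asymptotic. Let $\Sigma^{(q)}_n$ denote the set of words of length $n$ that do not contain $\bq|_q$ as a subword, and define the \emph{hole pressure}
\begin{equation*}
P_q(s) := \lim_{n\to\infty} \frac{1}{n}\log\sum_{\bi\in \Sigma^{(q)}_n} \phi^s(A_\bi).
\end{equation*}
Adapting Falconer's covering argument to the subshift $\Sigma_{[\bq|_q]}$, and matching the upper bound with a K\"aenm\"aki-type equilibrium measure supported on this subshift (cf.\ \cite{Kaenmaki04}), one shows that for Lebesgue-almost every $(v_1,\dots,v_k)$ the dimension $\dim \Lambda_q$ equals the unique $s_q$ with $P_q(s_q)=0$. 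The diagonal/same-ordering hypothesis is used crucially here: it makes the singular values of $A_\bi$ literally equal to the diagonal products $\prod_m a^{i_m}_j$ in a single fixed order, so $\phi^s$ is genuinely multiplicative along concatenations and the Falconer programme extends to the survivor subshift without change.

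Since $P$ is differentiable at $s_0 := \dim \Lambda$ with $P'(s_0) < 0$, and one checks $P_q \to P$ in $C^1$ on a neighbourhood of $s_0$, the mean value theorem applied to $P_q(s_q) = 0 = P(s_0)$ gives
\begin{equation*}
s_0 - s_q \;=\; \frac{P(s_0) - P_q(s_0)}{-P_q'(\xi_q)} \;\sim\; \frac{P(s_0) - P_q(s_0)}{-P'(s_0)}, \qquad \xi_q \in (s_q, s_0).
\end{equation*}
The constant $Z := -P'(s_0)$ is, by the envelope theorem applied to $P(s) = h_\mu + E_\mu(s)$, expressible as a sum of logarithms of the diagonal entries $a^i_j$ weighted by the K\"aenm\"aki probabilities $\mu([i])$; since $\mu$ is itself determined by the matrices, $Z$ depends only on the diagonal entries, matching the description promised in Remark \ref{rem:exs2}. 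This reduces the theorem to the escape-rate asymptotic
\begin{equation*}
\lim_{q\to\infty}\frac{P(s_0) - P_q(s_0)}{\mu[\bq|_q]} \;=\; \begin{cases} 1 & \bq\text{ not periodic,} \\ 1-\mu[\bq|_\ell] & \bq\text{ periodic with period }\ell. \end{cases}
\end{equation*}

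For this last and hardest step, I would invoke $s_0$-semiconformality of the K\"aenm\"aki measure on diagonal IFS to convert partition sums into measure: $e^{nP_q(s_0)} \asymp \mu(\Sigma^{(q)}_n)$, so $P(s_0) - P_q(s_0)$ is precisely the escape rate of $\mu$ through the hole $[\bq|_q]$. A first-return/renewal decomposition of the survivor measure then yields the Bunimovich--Yurchenko-type identity: in the aperiodic case the escape probability up to time $n$ is $\sim n\,\mu[\bq|_q]$, while in the periodic case overlapping occurrences of $\bq|_q$ inflate the first-return generating function geometrically, producing the correction factor $1-\mu[\bq|_\ell]$. The principal obstacle is running this renewal argument for the (non-Bernoulli) K\"aenm\"aki measure: it requires quantitative mixing on cylinders and control of the error terms as $o(\mu[\bq|_q])$ uniformly in $q$, which should follow from semiconformality combined with a transfer-operator spectral gap available for diagonal IFS.
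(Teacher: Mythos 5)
Your overall architecture is sound and, interestingly, it more closely parallels the paper's Section~\ref{sec:abstract} route (the strong-Gibbs case, Theorem~\ref{thm:homogeneous}) than its actual proof of Theorem~\ref{thm:main}, which goes through the permutation/frequency machinery of Section~\ref{sec:diagonal}. The three ingredients you identify are all used in the paper: (i) the hole-pressure formula $\dim\Lambda_q = t_q$ is exactly Theorem~\ref{thm:dimform}, which the paper imports from Käenmäki--Vilppolainen rather than re-deriving; (ii) the reduction $P(s_0)-P_q(s_0) = r_\mu([\bq|_q])$ via $s_0$-semiconformality is Lemma~\ref{lem:onlypressure}; (iii) the escape-rate asymptotic with the periodicity correction $1-\mu[\bq|_\ell]$ is Theorem~\ref{thm:ferguson pollicott}, imported from Ferguson--Pollicott/Keller--Liverani, so you do not need to rerun a renewal/transfer-operator argument from scratch. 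Also note that in the same-order diagonal case your worry about the Käenmäki measure being non-Bernoulli is moot: Lemma~\ref{lem:measure}(b) shows $\mu$ \emph{is} Bernoulli with weights $\phi^{s_0}(A_i)$, so both semiconformality and Gibbsianness are immediate.

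The genuine gap is in the middle step. You assert that ``one checks $P_q \to P$ in $C^1$ on a neighbourhood of $s_0$'' and then invoke the mean value theorem. This is precisely the delicate point, and it is not a routine check: $P_q$ is the pressure on a \emph{subshift of finite type} with an $n$-dependent forbidden block, and a priori you only know the $P_q$ are convex, satisfy $P_q \le P$, and converge pointwise at $s_0$. Convexity gives existence of one-sided derivatives, but not convergence of derivatives, and certainly not $C^1$ convergence on a neighbourhood. The paper handles exactly this obstruction in two different ways: in Section~\ref{sec:abstract} by the pair of convexity lemmas (Lemma~\ref{lem:convconv}, showing $P_q'(s_0\pm 0)\to P'(s_0\pm 0)$ under $P_q\le P$, $P_q(s_0)\to P(s_0)$, and Lemma~\ref{lem:convconv2}, which is the correct convex-analysis replacement for your mean value theorem step including control of $P_q'(t_q-0)$ via a contradiction argument); and in Section~\ref{sec:diagonal} by an entirely different, more explicit route that introduces the auxiliary point $\tilde t_q$ defined by a linearisation in the frequency variable, bounds $|t_q - \tilde t_q| = O(|P_q(s_0)|^{3/2})$ via Propositions~\ref{prop:lower} and~\ref{prop:upper}, and reads off the constant from Remark~\ref{rem:astoP}. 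The Section~\ref{sec:diagonal} route exists precisely because, when the diagonal entries are \emph{not} in the same order across the $A_i$, the pressure $P = \max_D P_D$ is a maximum of several smooth functions and may fail to be differentiable at $s_0$, so no derivative-based argument can work and one must track which permutation $D$ is optimal for each $q$ (cf.\ Remark~\ref{rem:exs}). For the same-order special case you are proving, your proposed route is recoverable, but you must replace the unjustified ``$C^1$ convergence'' claim with the convexity argument of Lemmas~\ref{lem:convconv}--\ref{lem:convconv2}, or with the $\tilde t_q$ device; otherwise the MVT step has no foundation.

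A minor point: you define $Z := -P'(s_0)$ so that the limit in the theorem is $1/Z = -1/P'(s_0)$; the paper's $Z(D)$ in~\eqref{eq:z} is $-1/P_D'(s_0-0)$, so the two conventions are reciprocal of one another, but the resulting asymptotic $\dim\Lambda - \dim\Lambda_q \sim -\mu[\bq|_q]/P'(s_0)$ agrees. Your envelope-theorem remark identifying $-P'(s_0)$ with $-\inner{(\log a^1_{\ceil{s_0}},\dots,\log a^k_{\ceil{s_0}})}{\alpha^{s_0}}$ matches the explicit expression in Remark~\ref{rem:exs2}.
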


\begin{theorem}\label{thm:homogeneous}
Let $\Lambda$ be a self-affine set corresponding to an iterated function system $\{A_1+v_1, \dots, A_k+v_k\}$ with $\|A_i\|<\tfrac 12$ for all $i=1, \dots, k$, and let $\bq\in \Sigma$. Denote by $\mu$ the K\"aenm\"aki measure for the value of $s$ for which $P(s)=0$, assume also that $P$ is differentiable at this point. Assume that $\mu$ is a strong-Gibbs measure - in particular, a Gibbs measure for a multiplicative potential $\psi$. Then, for Lebesgue almost all $(v_1, \dots, v_k)\in \R^{dk}$,
\[
\lim_{q\to \infty}\frac{\dim\Lambda - \dim\Lambda_{q}}{\mu[\bq|_q]}=\begin{cases}
-\frac{1}{P'(s)}, & \textrm{ when }\bq\textrm{ is not periodic }\\
 -\frac{1 - \psi(\bq|_\ell)}{P'(s)}, & \textrm{ when }\bq \textrm{ is periodic with period }\ell.
\end{cases}
\]
\end{theorem}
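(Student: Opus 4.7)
The plan is to reduce the dimension computation to a pressure computation for the open subsystem, and then to a Keller--Liverani-style escape-rate expansion. Let $s$ satisfy $P(s)=0$, let $\Sigma^{(q)}_n\subset \Sigma_n$ be the words of length $n$ that do not contain $\bq|_q$ as a subword, and define the restricted pressure
\begin{equation*}
P_q(t) \;=\; \lim_{n\to\infty}\frac 1n\log\sum_{\bi\in\Sigma^{(q)}_n}\phi^t(A_\bi),
\end{equation*}
with $s_q$ its zero. The plan has three steps: (i) identify $\dim\Lambda=s$ and $\dim\Lambda_q=s_q$ simultaneously for all $q$, on a full-measure set of translation vectors; (ii) expand $P(s)-P_q(s)$ asymptotically in the scale $\mu(U_q)$ with $U_q=[\bq|_q]$; (iii) linearize via differentiability of $P$ at $s$.

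For step (i), the survivor system $\Sigma_{[\bq|_q]}$ is a sofic subshift and can be recoded as a subshift of finite type (equivalently, a graph-directed self-affine system) whose matrices are still drawn from $\{A_1,\dots,A_k\}$, so the norm bound $\|A_i\|<1/2$ is preserved. Falconer's theorem, extended to sub-self-affine or graph-directed systems, yields $\dim\Lambda_q=s_q$ for a.e.\ $(v_1,\dots,v_k)$. Because $q$ ranges over $\N$, the exceptional set remains Lebesgue-null, so a single generic translation vector can be chosen that also realises Falconer's formula $\dim\Lambda=s$ for the full system.

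Step (ii) is the heart of the argument; the claim is
\begin{equation*}
P(s)-P_q(s)\;=\;\mu(U_q)\bigl(1-\psi(\bq|_\ell)\cdot\mathbf{1}_{\bq\text{ periodic}}\bigr)\;+\;o(\mu(U_q)).
\end{equation*}
The strong-Gibbs hypothesis is what enables this conversion from singular-value sums to measure: $s$-semiconformality together with $P(s)=0$ gives $\sum_{\bi\in\Sigma^{(q)}_n}\phi^s(A_\bi)\asymp \mu(\{\bi:\sigma^i\bi\notin U_q\text{ for }i<n\})$, so $P(s)-P_q(s)$ is the escape rate of $\mu$ through $U_q$. A first-return decomposition to $U_q$ then extracts the leading term: in the non-periodic case, distinct returns can be shown to decorrelate to leading order, giving multiplier $1$; in the periodic case, $\sigma^\ell$ maps $U_q$ into itself with multiplicative Gibbs weight $\psi(\bq|_\ell)<1$, and summing the resulting geometric series of repeated returns produces the correction $1-\psi(\bq|_\ell)$.

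For step (iii), continuity of the pressure derivative under the vanishing hole, $P_q'(s)\to P'(s)\neq 0$, follows from standard perturbation theory for the strong-Gibbs transfer operator; combined with $P_q(s_q)=0$ and the mean value theorem, this gives $s-s_q=-P_q(s)/P_q'(\xi_q)$ for some $\xi_q$ between $s_q$ and $s$, and hence
\begin{equation*}
\frac{\dim\Lambda-\dim\Lambda_q}{\mu[\bq|_q]}\;=\;\frac{-P_q(s)}{P_q'(\xi_q)\,\mu(U_q)}\;\longrightarrow\;-\frac{1-\psi(\bq|_\ell)\cdot\mathbf{1}_{\bq\text{ periodic}}}{P'(s)},
\end{equation*}
which is the stated formula. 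The main obstacle will be step (ii), and specifically the periodic case: both the multiplicative Gibbs structure (to factor the weights of successive returns as $\psi(\bq|_\ell)^n$) and tight control of the error terms uniformly in $q$ are essential there, and the non-conformality of the affine system means we cannot invoke off-the-shelf transfer-operator results without care—which is precisely why the differentiability of $P$ and the strong-Gibbs assumption both appear in the hypotheses.
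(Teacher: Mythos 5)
Your overall architecture matches the paper's: reduce the dimension gap to a pressure gap $P(s)-P_q(s)$ via the Falconer/K\"aenm\"aki dimension formula, convert that pressure gap to an escape rate using the $s$-semiconformal and Gibbs properties, invoke the Ferguson--Pollicott/Keller--Liverani escape-rate asymptotics for the $1$ versus $1-\psi(\bq|_\ell)$ dichotomy, and then linearize. Steps (i) and (ii) essentially re-derive by hand what the paper obtains directly by citation (step (i) is \cite[Theorem 5.2]{KaenmakiVilppolainen10}, quoted as Theorem \ref{thm:dimform}; step (ii) is Lemma \ref{lem:onlypressure} together with Theorem \ref{thm:ferguson pollicott}), and those sketches are sound in spirit; the dichotomy and the role of the multiplicative Gibbs weight $\psi(\bq|_\ell)$ are correctly identified.

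The genuine problem is step (iii). You claim that ``$P_q'(s)\to P'(s)\neq 0$ follows from standard perturbation theory for the strong-Gibbs transfer operator'' and then apply the mean value theorem to $P_q$. Neither move is available here. For $t\neq s_0$ the singular value function $\phi^t$ is not a multiplicative potential, so $P_q(t)$ is not the leading eigenvalue of any transfer operator and the Keller--Liverani perturbation framework simply does not apply to $P_q$ as a function of $t$; the strong-Gibbs hypothesis ties $\mu$ to a multiplicative $\psi$ only at $s_0$, which is precisely why the paper uses it only in Lemma \ref{lem:onlypressure}. Moreover the paper explicitly does \emph{not} assume $P_q$ is differentiable, so your mean value theorem step $s-s_q=-P_q(s)/P_q'(\xi_q)$ is not licensed as stated. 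The paper circumvents both issues with two elementary convexity lemmas: Lemma \ref{lem:convconv} shows that, for a sequence of convex functions $P_q\leq P$ with $P_q(s_0)\to P(s_0)$, the one-sided derivatives at $s_0$ are squeezed between $P'(s_0-0)$ and $P'(s_0+0)$ (so they converge to $P'(s_0)$ once $P$ is differentiable there, which is a hypothesis of the theorem); Lemma \ref{lem:convconv2} then uses only convexity, $P_q\leq P$, and this one-sided derivative convergence to conclude $-P_q(s_0)/(s_0-t_q)\to -P'(s_0-0)$, trapping the quotient between two lines without ever evaluating a derivative at an unknown intermediate point. You should replace your transfer-operator appeal and MVT with an argument of this kind; as written, step (iii) rests on tools that are not applicable in the non-conformal affine setting.
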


This theorem will be proved in Section \ref{sec:abstract}. 

\begin{remark}
\begin{itemize}
\item[(1)] It might be tempting to think that, since the result above holds for diagonal matrices it would be easy to extend it to the case of upper triangular matrices. The temptation is due to \cite[Theorem 2.5]{FalconerMiao07}, which states that for an iterated function system with upper triangular matrices the pressure only depends on the diagonal elements of the matrices. However, this does not seem straightforward, see Remark \ref{rem:exs3}.
\item[(2)] Notice that in the statements of Theorems \ref{thm:homogeneous} and \ref{thm:main} the normalizing factor in the denominator of the limit plays the same role as the Lyapunov exponent in, for example, \cite{FergusonPollicott12}.
\end{itemize}
\end{remark}

\section{The pressure formula for the dimension and other facts}

From here on we consider the point $\bq\in \Sigma$ fixed, and denote $\Lambda_{\pi[\bq|_q]}=\Lambda_q$ for a choice of positive integer $q$. We start by recalling a pressure formula for the dimension of the surviving set.

Denote, for $n\in \N$,
\[
\Sigma_{n,q}=\{\bi|_n\in \Sigma_n\mid \sigma^i(\bi)\notin[\bq|_q]\textrm{ for all }i\}.
\]
Define the {\it reduced pressure }
\[
P_q(t)=\lim_{n\to \infty}\frac 1n \log \sum_{\bi\in \Sigma_{n, q}}\phi^t(A_\bi).
\]
\begin{theorem} \label{thm:dimform}
Let $\bq\in \Sigma$, $q \in \N$. For an iterated function system $\{A_1+v_1, \dots, A_k+v_k\}$ with $\|A_i\|<\tfrac 12$, for Lebesgue almost all $(v_1, \dots, v_k)\in \R^{dk}$,
\[
\dim \Lambda_{q}=\min\{d, t_q\}
\]
where $t_q$ is the unique value for which $P_q(t_q)=0$.
\end{theorem}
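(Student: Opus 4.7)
The plan is to adapt Falconer's classical proof of the dimension formula for self-affine sets to the invariant subsystem living on $\Sigma_{[\bq|_q]}$, which is a shift-invariant closed subset of $\Sigma$, namely the subshift of finite type obtained by forbidding the single word $\bq|_q$. Its admissible words of length $n$ form exactly $\Sigma_{n,q}$, so that $P_q$ is simply the Falconer sub-additive pressure of this subshift and the limit in its definition exists by the usual sub-additivity argument.

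The upper bound $\dimH\Lambda_q \le \min\{d, t_q\}$ holds for every $(v_1,\dots,v_k)$ by a direct covering argument. For any $t>t_q$ one has $P_q(t)<0$, whence $\sum_{\bi\in\Sigma_{n,q}}\phi^t(A_\bi)\to 0$ exponentially; covering each $\pi([\bi])$ by an ellipsoid with semi-axes comparable to the singular values of $A_\bi$ and decomposing that ellipsoid into balls of the smallest axis-length yields a cover of $\Lambda_q$ whose total $t$-dimensional weight is bounded, up to a constant, by $\sum_{\bi\in\Sigma_{n,q}}\phi^t(A_\bi)$. Hence $\dimH\Lambda_q\le t$, and letting $t\downarrow t_q$ completes the bound.

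For the lower bound we use the transversality method. Assume $t_q<d$ (the case $t_q\ge d$ follows by running the argument at $t$ slightly below $d$). K\"aenm\"aki's construction~\cite{Kaenmaki04}, applied to the subshift $\Sigma_{[\bq|_q]}$, produces a shift-invariant equilibrium measure $\nu$ satisfying the semi-conformal bound $\nu([\bi])\le C\phi^{t_q}(A_\bi)$ for $\bi\in\Sigma_{n,q}$ (using $P_q(t_q)=0$). The Falconer--Solomyak transversality estimate
\[
\int_{B(0,R)^{dk}}\abs{\pi(\bi)-\pi(\bj)}^{-t}\,dv_1\cdots dv_k\le C_t\,\phi^t(A_{\bi\wedge\bj})^{-1},
\]
valid for $0\le t<d$ whenever $\norm{A_i}<\tfrac12$, combined with Fubini, reduces the finiteness of the expected $t$-energy of $\pi_*\nu$ to convergence of
\[
\sum_n\sum_{\ba\in\Sigma_{n,q}}\phi^t(A_\ba)^{-1}\,\nu([\ba])^2.
\]
Substituting the semi-conformal bound and invoking strict monotonicity of $P_q$, standard manipulations of the singular value function (as in Falconer's original argument) show that this series converges for every $t<t_q$; thus $\pi_*\nu$ has finite $t$-energy for Lebesgue almost every $(v_1,\dots,v_k)$, and taking a countable sequence $t\uparrow t_q$ yields $\dimH\Lambda_q\ge t_q$ outside a single null set.

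The main technical point is verifying that both the transversality estimate and the K\"aenm\"aki equilibrium measure transfer to the subsystem. The former is automatic because the inequality is pointwise in $(\bi,\bj)$, so restricting the outer double integral from $\Sigma\times\Sigma$ to $\Sigma_{[\bq|_q]}\times\Sigma_{[\bq|_q]}$ only decreases it. The latter follows by running K\"aenm\"aki's original construction on the admissible cylinders of $\Sigma_{[\bq|_q]}$ rather than on all of $\Sigma^*$; sub-additivity of $\log\sum\phi^t$ restricted to $\Sigma_{n,q}$ is the only nontrivial ingredient, and it holds up to a boundary correction that does not affect the exponential rate.
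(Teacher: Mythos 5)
The paper's own proof is a one-line citation to K\"aenm\"aki--Vilppolainen, Theorem~5.2, so your attempt to give a self-contained Falconer-style argument is a genuinely different route. The upper bound is fine, the transversality estimate does restrict pointwise to the subshift, and the overall structure of the energy argument is the right one. However, there is a real gap in the lower bound: you assert that K\"aenm\"aki's construction, applied to the subshift, yields an invariant measure $\nu$ with the uniform semi-conformal bound $\nu([\bi])\le C\phi^{t_q}(A_\bi)$. This is \emph{not} what K\"aenm\"aki's existence theorem gives. It produces an equilibrium state maximizing $h_\nu+E_\nu(t_q)$; by Shannon--McMillan--Breiman this only yields the $\nu$-a.e.\ asymptotic $\tfrac1n\log\bigl(\nu([\bi|_n])/\phi^{t_q}(A_{\bi|_n})\bigr)\to 0$, not a uniform two-sided (or even one-sided) bound with a fixed constant. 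Indeed, the very paper you are proving a theorem from treats $s$-semiconformality / the strong-Gibbs property of the K\"aenm\"aki measure as an \emph{additional hypothesis} (see Theorem~\ref{thm:homogeneous} and the remark that ``it is essential that the measure $\mu$ is also $s$-semiconformal''), precisely because it fails in general. Since $\phi^{t_q}$ is only sub-multiplicative, a naive weak-star limit of the level-$n$ weightings $\phi^{t_q}(A_\bi)/Z_n$ gives $\nu([\bj])\le\phi^{t_q}(A_\bj)\,Z_{n-m}/Z_n$, and the ratio $Z_{n-m}/Z_n$ need not be bounded in $n$.

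To repair the argument you must either avoid a single Gibbs-type measure altogether --- as Falconer's original proof does, by working level-by-level with the normalized weights and absorbing a subexponentially growing constant $e^{n\epsilon}$ into the energy sum (this is enough because you only need convergence at each $t<t_q$) --- or prove a quasi-multiplicativity/specification statement for $\phi^{t_q}$ on the subshift and use it to construct a measure with the upper Gibbs bound. Note also that your remark about sub-additivity ``up to a boundary correction'' is slightly off: sub-additivity of $\log\sum_{\bi\in\Sigma_{n,q}}\phi^t(A_\bi)$ holds \emph{exactly}, because any subword of an admissible word is admissible, so the existence of $P_q$ is immediate. What genuinely requires a boundary (specification) correction is \emph{super}-additivity, which is exactly what enters the measure construction you would need for the lower bound. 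Either make that explicit or fall back on the citation to K\"aenm\"aki--Vilppolainen, as the authors do.
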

\begin{proof}
This is \cite[Theorem 5.2]{KaenmakiVilppolainen10}.
\end{proof}
\begin{remark}\label{rem:close dim}
Notice that as $q\to \infty$, the reduced pressure approaches the full pressure, and hence the dimension of the surviving set $\Lambda_{q}$ approaches the dimension of $\Lambda$.
\end{remark}
\begin{remark}
The set $\Sigma_{n,q}$ can be written in an equivalent form
\[
\Sigma_{n,q}=\{\bi|_n\in \Sigma_n\mid \bi\in \Sigma\textrm{ is such that }\sigma^{i}(\bi)\notin[\bq|_q]\textrm{ for all }i<n\},
\]
since any point that does not enter the hole in the first $n$ iterations can be completed to a word that never enters the hole.
\end{remark}

The following facts about the K\"aenm\"aki measure are standard.
\begin{lemma}\label{lem:measure}
Consider the K\"aenm\"aki measure $\mu$ at the value $s_0$, where $s_0$ is the root of $P$.
\begin{itemize}
\item[a)] When there is some $A$ such that $A_i=A$ for all $i=1, \dots, k$, then $\mu$ is the Bernoulli measure with equal weights.
\item[b)] When $A_i$ are diagonal matrices with the size of the diagonal elements in the same order, then $\mu$ is a Bernoulli measure with cylinder weights $\phi^{s_0}(A_1), \dots, \phi^{s_0}(A_k)$.
\end{itemize}
\end{lemma}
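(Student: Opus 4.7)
\section*{Proof proposal for Lemma \ref{lem:measure}}

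The plan is to exhibit the claimed Bernoulli measures as concrete equilibrium measures by checking the variational equality $P(s_0)=E_\nu(s_0)+h_\nu$ directly, relying on the special multiplicative structure of $\phi^{s_0}$ in each case.

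For part (a), when all $A_i$ coincide with a single matrix $A$, every word $\bi\in\Sigma_n$ gives the same product $A_\bi=A^n$, so $\phi^s(A_\bi)=\phi^s(A^n)$ is constant in $\bi$. First I would compute
\[
P(s)=\lim_{n\to\infty}\tfrac1n\log\bigl(k^n\,\phi^s(A^n)\bigr)=\log k+\lim_{n\to\infty}\tfrac1n\log\phi^s(A^n).
\]
Then for the uniform Bernoulli measure $\nu$ with $p_i=1/k$ we have $h_\nu=\log k$ and
\[
E_\nu(s)=\lim_{n\to\infty}\tfrac1n\sum_{\bi\in\Sigma_n}k^{-n}\log\phi^s(A^n)=\lim_{n\to\infty}\tfrac1n\log\phi^s(A^n),
\]
so $P(s)=h_\nu+E_\nu(s)$ and $\nu$ is an equilibrium measure. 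Specializing to $s=s_0$ proves (a).

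For part (b), the key observation is the multiplicativity of $\phi^s$ on the subsemigroup generated by the $A_i$. Because every $A_i=\diag(a^i_1,\dots,a^i_d)$ has diagonal entries in the same increasing order, their products remain diagonal, and the diagonal entries of $A_\bi$ are still in the same order $\prod_l a^{i_l}_1\le\cdots\le\prod_l a^{i_l}_d$. Thus the ordered singular values of $A_\bi$ are precisely the products $\sigma_j(A_\bi)=\prod_l\sigma_j(A_{i_l})$, and from the definition \eqref{esvf} I would read off the identity
\[
\phi^s(A_\bi)=\prod_{l=1}^{n}\phi^s(A_{i_l})
\]
for every $\bi\in\Sigma_n$ and every $s\ge0$. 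Consequently $P(s)=\log\sum_{i=1}^k\phi^s(A_i)$, so the hypothesis $P(s_0)=0$ becomes $\sum_i\phi^{s_0}(A_i)=1$, and the vector $\bigl(\phi^{s_0}(A_1),\dots,\phi^{s_0}(A_k)\bigr)$ is a probability vector.

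Let $\nu$ denote the associated Bernoulli measure. Then $\nu[\bi]=\phi^{s_0}(A_\bi)$, and the entropy/energy reduce to one-step sums:
\[
h_\nu=-\sum_i\phi^{s_0}(A_i)\log\phi^{s_0}(A_i),\qquad E_\nu(s_0)=\sum_i\phi^{s_0}(A_i)\log\phi^{s_0}(A_i).
\]
Adding gives $h_\nu+E_\nu(s_0)=0=P(s_0)$, so $\nu$ is an equilibrium (K\"aenm\"aki) measure, proving (b).

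I expect no serious obstacle; the substance of the argument is the multiplicativity $\phi^s(A_\bi)=\prod_l\phi^s(A_{i_l})$ under the common-ordering assumption, which is the only place the hypotheses really enter. The only minor point of care is that the singular values of a positive diagonal matrix are the diagonal entries themselves (reordered), which justifies the factorization of $\sigma_j(A_\bi)$ for all $j$ and hence for the fractional exponent appearing in \eqref{esvf}. Once multiplicativity is in hand, the identification of the equilibrium measure with a Bernoulli measure is automatic from the variational equality.
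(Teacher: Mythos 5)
Your proof is correct and takes essentially the same route as the paper: the core of the argument in both cases is the multiplicativity $\phi^s(A_\bi)=\prod_l\phi^s(A_{i_l})$ under the common-ordering assumption, which forces $P(s_0)=0$ to read $\sum_i\phi^{s_0}(A_i)=1$ and identifies the Bernoulli measure with those weights as an equilibrium measure. The paper states this in two sentences (and dismisses part (a) as immediate); you have simply written out the verification of the variational equality $P(s_0)=h_\nu+E_\nu(s_0)$ explicitly, including the one-step reduction of entropy and energy for Bernoulli measures, which is a useful level of detail but not a different approach.
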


\begin{proof}
\begin{itemize}
\item[a)] Immediate.
\item[b)] In the diagonal case the singular value function is multiplicative. Hence the zero of the pressure is obtained at the point where $\sum_{i=1}^d\phi^s(A_i)=1$, so that $\phi^{s_0}(A_i)$ define a probability vector. The K\"aenm\"aki measure is a Bernoulli measure with these weights.
\end{itemize}
\end{proof}



Define the {\it escape rate} of a measure $\nu$ on $\Sigma$ as
\[
r_{\nu}(U)=-\lim_{n\to \infty}\frac 1n \log\nu\{\bi\in \Sigma\mid \sigma^i(\bi)\notin U\textrm{ for }i<n\},
\]
when the limit exists. We quote the following special case of Ferguson and Pollicott \cite{FergusonPollicott12}. In the theorem we make a reference to $P(\psi)$, the pressure corresponding to a potential $\psi$. This is defined analogously to $P(s)$ in \eqref{eq:pressure}, but with $\psi$ in place of $\phi^s$. We note that we will, in fact, only apply Theorem \ref{thm:ferguson pollicott} when $P(\psi)=P(s)$.

\begin{theorem}\label{thm:ferguson pollicott}
Let $\bq\in \Sigma$ and let $U_q=[\bq|_q]$. Consider a multiplicative potential $\psi$ for which $P(\psi)=0$. For a Gibbs measure $\mu$ on $\Sigma$, the escape rate $r_\mu(U_q)$ always exists and
\[
\lim_{q\to \infty} \frac{r_\mu(U_q)}{\mu(U_q)}=
 \begin{cases}
   1, &\textrm{ if } \bq\textrm{ is not periodic }\\
    1 - \psi(\bq|_\ell), &\text{ if } \bq \textrm{ is periodic with period }\ell.
  \end{cases}
\]
\end{theorem}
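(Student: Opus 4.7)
The plan is to realize the escape rate as $-\log\lambda_q$ for the leading eigenvalue $\lambda_q$ of a transfer operator with a hole, and to extract its first-order asymptotics via a spectral perturbation of Keller--Liverani type. Since $\mu$ is Gibbs for the multiplicative potential $\psi$ with $P(\psi)=0$, the bilateral Gibbs bound $\mu[\bi]\asymp\psi(\bi)$ holds uniformly in $\bi\in\Sigma^*$. First I would work on a suitable Banach space $\mathcal{B}$ of observables on $\Sigma$ (H\"older continuous functions) on which the transfer operator $\mathcal{L}$ associated with $\psi$ has $1$ as a simple isolated leading eigenvalue with a spectral gap, whose leading eigenvector is the density of $\mu$ and whose dual eigenvector is $\mu$ itself. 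Define the open operator $\mathcal{L}_q$ by composing $\mathcal{L}$ with multiplication by $\mathbf{1}_{\Sigma\setminus U_q}$; a direct unwinding of the definition then gives
\[
\mu\bigl\{\bi:\sigma^i\bi\notin U_q\text{ for }i<n\bigr\}=\int\mathcal{L}_q^n\mathbf{1}\,d\mu,
\]
so that $r_\mu(U_q)$ exists and equals $-\log\lambda_q$ where $\lambda_q$ is the leading eigenvalue of $\mathcal{L}_q$.

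Next, I would apply the Keller--Liverani perturbation theorem to the family $\mathcal{L}_q\to\mathcal{L}$, which yields the first-order asymptotic
\[
1-\lambda_q=\mu(U_q)\,\eta_q+o\bigl(\mu(U_q)\bigr),
\]
where $\eta_q$ is the conditional probability, under $\mu|_{U_q}/\mu(U_q)$, that an orbit starting in $U_q$ never returns to $U_q$ under the shift. Since $-\log\lambda_q=(1-\lambda_q)(1+o(1))$, one obtains $r_\mu(U_q)/\mu(U_q)\to\lim_q\eta_q$, reducing the theorem to the computation of this limit.

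For the last step I would use the combinatorial structure of $\bq$. For $\bi\in U_q$ the event $\sigma^n\bi\in U_q$ with $n<q$ forces $n$ to be a period of the finite word $\bq|_q$. If $\bq$ is aperiodic, then for every fixed $n\geq 1$ and all $q$ sufficiently large $n$ is not a period of $\bq|_q$, so $U_q\cap\sigma^{-n}U_q=\emptyset$; a Gibbs mixing estimate handles the tail $n\geq q$ and gives $\eta_q\to 1$. If $\bq$ has minimal period $\ell$, then the smallest possible return time is $\ell$ (any smaller return would force a smaller period of $\bq|_q$, which for large $q$ contradicts minimality of $\ell$ in $\bq$), and by the multiplicative Gibbs property the conditional probability of a return at time exactly $\ell$ equals $\psi(\bq|_\ell)$; the first-return probability therefore converges to $\psi(\bq|_\ell)$, giving $\eta_q\to 1-\psi(\bq|_\ell)$.

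The main obstacle is the Keller--Liverani step: it requires uniform triple-norm estimates on $\mathcal{L}_q-\mathcal{L}$ in the chosen Banach space, and, especially in the periodic case, one must argue that the contributions of \emph{multiple} returns to $U_q$ (which naively look like a geometric series in $\psi(\bq|_\ell)$) are absorbed into $o(\mu(U_q))$ rather than contributing at leading order. This is the standard ``first return suffices'' phenomenon in open-system perturbation theory, typically handled by a resolvent expansion around $z=1$ showing that the dominant contribution comes from the rank-one projection onto the leading eigenspace of $\mathcal{L}$.
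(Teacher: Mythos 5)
The paper does not prove this statement itself; its ``proof'' is a citation to Ferguson--Pollicott and to Keller--Liverani, and your proposal is essentially a faithful unpacking of exactly the argument those references carry out: realize the escape rate as $-\log\lambda_q$ for the leading eigenvalue of the open transfer operator, apply Keller--Liverani spectral perturbation to get $(1-\lambda_q)/\mu(U_q)\to 1-\sum_k q_k$ with $q_k$ the first-return probabilities, and then use the combinatorics of prefixes of $\bq$ (and the multiplicativity of $\psi$, which makes the conditional return-at-time-$\ell$ probability exactly $\psi(\bq|_\ell)$ rather than merely comparable to it up to Gibbs constants) to evaluate the limit. This is the same route as the cited sources, and the step you flag as the main obstacle---the uniform triple-norm estimate needed to invoke Keller--Liverani---is precisely the content those papers supply.
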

\begin{proof}
See \cite[Proposition 5.2 and Theorem 1.1]{FergusonPollicott12}  or see \cite[Theorem 2.1]{KellerLiverani09}.
\end{proof}
Notice that in order for us to apply this theorem in our set-up it is essential that the measure $\mu$ is also $s$-semiconformal.
\begin{lemma} \label{lem:onlypressure}
Let $\bq\in \Sigma$ and let $U_q=[\bq|_q]$. Let $s$ be where the pressure $P(s)=0$. Let $\mu$ be the K\"aenm\"aki measure at this value $s$, and assume that it is a strong-Gibbs measure, in particular, Gibbs for some multiplicative potential $\psi$. Then
\[
\lim_{q\to \infty}\frac{P(s) - P_{q}(s)}{\mu(U_q)}=
 \begin{cases}
   1, &\textrm{ if } \bq\textrm{ is not periodic }\\
    1 - \psi(\bq|_\ell), &\text{ if } \bq \textrm{ is periodic with period }\ell.
  \end{cases}
\]
\end{lemma}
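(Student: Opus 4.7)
The plan is to identify the quantity $P(s)-P_q(s)$ with the escape rate $r_\mu(U_q)$ and then invoke Theorem \ref{thm:ferguson pollicott}. Since $P(s)=0$ by assumption, this reduces to showing $P_q(s)=-r_\mu(U_q)$.

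First, $s$-semiconformality of $\mu$ gives constants $0<c\le C<\infty$ with $c\phi^s(A_\bi)\le\mu([\bi])\le C\phi^s(A_\bi)$ for every finite word $\bi$ (using $P(s)=0$). Summing over $\bi\in\Sigma_{n,q}$ and taking $\tfrac{1}{n}\log$ absorbs the constants, so
$$P_q(s)=\lim_{n\to\infty}\frac{1}{n}\log\sum_{\bi\in\Sigma_{n,q}}\mu([\bi])=\lim_{n\to\infty}\frac{1}{n}\log\mu(B_n),$$
where $B_n:=\{\bj\in\Sigma:\bj|_n\in\Sigma_{n,q}\}$ is the union of length-$n$ cylinders corresponding to extendable prefixes.

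Second, I would compare $B_n$ to the set $A_m:=\{\bj\in\Sigma:\sigma^i(\bj)\notin U_q\text{ for }i<m\}$ appearing in the definition of the escape rate. The remark following Theorem \ref{thm:dimform} reformulates $\bi|_n\in\Sigma_{n,q}$ as the condition that no occurrence of $\bq|_q$ starts at any of the positions $0,1,\ldots,n-q$ within $\bi|_n$, which is exactly the defining condition of $A_{n-q+1}$. Hence $B_n=A_{n-q+1}$, and since $q$ is fixed, shifting the index by $q-1$ does not affect the exponential rate, giving $P_q(s)=-r_\mu(U_q)$.

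Finally, since $\mu$ is strong-Gibbs, it is in particular a Gibbs measure for some multiplicative potential $\psi$ with $P(\psi)=P(s)=0$, so Theorem \ref{thm:ferguson pollicott} applies to $\mu$ and yields the two-case formula for $\lim_{q\to\infty}r_\mu(U_q)/\mu(U_q)$. Combined with $P(s)-P_q(s)=r_\mu(U_q)$, this is precisely the claim.

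The main obstacle is the bookkeeping in the second step: one must verify that the extension statement asserted in the remark after Theorem \ref{thm:dimform} really gives $B_n=A_{n-q+1}$ (i.e.\ every prefix avoiding $\bq|_q$ up to position $n-q$ can be completed to a surviving word), so that the exponential rates of $\mu(B_n)$ and $\mu(A_n)$ agree. Once that identification is in hand, the translation between the singular-value-function side and the measure side via $s$-semiconformality is routine, and Theorem \ref{thm:ferguson pollicott} delivers the rest.
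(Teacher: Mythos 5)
Your proof is correct and follows essentially the same route as the paper's: use $s$-semiconformality (with $P(s)=0$) to replace $\phi^s(A_\bi)$ by $\mu[\bi]$ in the reduced pressure, identify the resulting quantity with $-r_\mu(U_q)$, and then invoke Theorem \ref{thm:ferguson pollicott}. The only difference is that you spell out the identification $\lim_n \tfrac1n\log\sum_{\bi\in\Sigma_{n,q}}\mu[\bi]=-r_\mu(U_q)$ via the comparison $A_n\subseteq B_n\subseteq A_{n-q+1}$ (a fixed shift of index, hence the same exponential rate), a step the paper compresses into one line of the displayed chain of equalities and justifies only by the remark after Theorem \ref{thm:dimform}.
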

\begin{proof}
We have, using the $s$-semiconformal property
\begin{align*}
P(s) - P_{q}(s)&=0 - \lim_{n\to \infty} \tfrac 1n \log \sum_{\bi\in \Sigma_{n,q}}\phi^s(A_\bi) \\
&=- \lim_{n\to \infty} \tfrac 1n \log \sum_{\bi\in \Sigma_{n,q}}\mu[\bi]\\
&=r_{\mu}([\bq|_q]).
\end{align*}
The proof is now finished by Theorem \ref{thm:ferguson pollicott}.
\end{proof}

\section{Diagonal matrices}\label{sec:diagonal}

Let us start from a more detailed description of the singular value pressure in the diagonal case. Let $D=(e_1, \ldots, e_d)\in S_d$ be a permutation of $\{1,\ldots,d\}$. For a diagonal matrix $A=\diag(a_j)$ denote

\[
\varphi_D^s(A) = a_{e_1} \cdot \ldots \cdot a_{e_{\lfloor s \rfloor}} \cdot a_{e_{\lfloor s \rfloor +1}}^{s-\lfloor s \rfloor}.
\]

Naturally,
\[
\varphi_D^s(A) \leq \varphi^s(A) \leq \sum_D \varphi_D^s(A).
\]
Hence, if we define the $D$-pressure analogously to the singular value pressure
\[
P_D(s) = \lim_{n\to\infty} \frac 1n \log \sum_{\bi\in \Sigma_n}\varphi_D^s(A_\bi)
\]
and the reduced $D$-pressure analogously to the reduced pressure
\[
P_{D,q}(s) = \lim_{n\to\infty} \frac 1n \log \sum_{\bi\in \Sigma_{n,q}}\varphi_D^s(A_\bi)
\]
then
\[
P(s) = \max_{D\in S_d} P_D(s), P_q(s) = \max_{D\in S_d} P_{D,q}(s).
\]
In particular, denoting by $t_q^D$ the zero of $P_{D,q}$, we have
\[
\dim \Lambda_q = \min \{d, \max_D t_q^D\}
\]
whenever the assumptions of Theorem \ref{thm:dimform} are satisfied.

Thus, in order to find the zero of $P_q$ it will be enough for us to be able to find the zeroes $t_q^D$ for all choices of $D$. Which will be significantly simplified by the fact that, contrary to $\varphi^s$, $\varphi_D^s$ is a multiplicative potential. Moreover, to prove Theorem \ref{thm:main} we do not need to check all possible $D$: as $P_{D,q}\to P_D$ when $q\to\infty$, it is enough for us to only consider those $D$ for which $P_D(s_0)=P(s_0)=0$.

Let us start by denoting by $\mu_D$ the Bernoulli measure with the probability vector $(p_1^D,\ldots, p_k^D)=(\varphi_D^{s_0}(A_1), \ldots, \varphi_D^{s_0}(A_k))$. Because $\varphi_D^{s_0}$ is multiplicative, as in Lemma \ref{lem:measure} we see that this really is a probability vector. Observe that even though we only consider $D$ for which $P_D(s_0)=0$, this measure can still in general depend on $D$.


Recall Lemma \ref{lem:onlypressure}, and notice that by the multiplicativity of the potential $\phi^s_D$, the proof of Lemma \ref{lem:onlypressure} goes through unaltered for $\mu_D$, the $D$-pressure and reduced $D$-pressure. Furthermore, $\mu_D$ is a Gibbs measure for the potential $\phi^s_D$. 

The idea of the proof of Theorem \ref{thm:main} is as follows. We fix some $D$ for which $P_D(s_0)=0$ and then we will bound $s_0-t_q^D$ from above and below with bounds, the difference between which approaches 0 faster than $-P_{D,q}(s_0)$ as $q\to\infty$. This will let us estimate the limit of $(s_0-t_q^D)/\mu_D([\bq|_q])$. To simplify the notation, we will skip the index $D$ in the rest of this section -- but the reader should remember that the potential $\varphi^s$ we work with is not the singular value function but an auxiliary multiplicative potential which is only equal to the singular value function in the case when the diagonal elements $(a_1^i,\ldots,a_d^i)$ are in the same order for all $i$.

We need some notation. Denote by $\Delta$ the simplex of length $k$ probability vectors. Given a finite word $\bi\in \Sigma_n$, let
\[
\fq(\bi)=\frac 1n (\#\{i\in \{1, \dots, n\}\mid \bi_i=1\}, \dots, \#\{i\in \{1, \dots, n\}\mid \bi_i=k\})\in \Delta,
\]
and for an infinite word $\bi\in \Sigma$,
\[
\fq(\bi)=\lim_{n\to \infty} \frac 1n\freq(\bi|_n)\in \Delta,
\]
if the limit exists. Fix $\epsilon>0$. Let $E=E(\epsilon)$ be $\epsilon$-dense in $\Delta$. Then the number of elements of $E$, $\#E=\epsilon^{1-k}=:N$. Fix $\alpha\in \Delta$, and denote
\[
F_n(\alpha)=\{\bi\in \Sigma_n\mid \max_i|\fq(\bi)(i) - \alpha(i)|<\epsilon\},
\]
where $\alpha(i)$ is the $i$-th coordinate of $\alpha$ and the same for $\fq(\bi)$. Assume, without loss of generality, that $E$ was chosen in such a way that every point in $\Sigma_n$ belongs to at most some $K_d$ of the sets $F_n(\alpha)$, where $\alpha\in E(\varepsilon)$, with the constant $K_d$ not depending on $n$.

Further, given some $\alpha\in \Delta$, denote
\[
A(\alpha)=\diag((a_{e_1}^1)^{\alpha(1)}\cdots (a_{e_1}^k)^{\alpha(k)}, \dots, (a_{e_d}^1)^{\alpha(1)}\cdots (a_{e_d}^k)^{\alpha(k)})
\]
This is a kind of a dummy matrix simulating the frequency $\alpha$. Finally, let $o(\epsilon)$ be a function that approaches $0$ as $\epsilon \to 0$, and $o(n)$ a function that approaches $0$ as $n\to \infty$.

\begin{lemma}\label{lem:approximation}
At a given scale we can approximate $P(s)$ by sequences of only one frequency; that is, given $\epsilon>0$ and $n>0$, there is $\alpha\in E(\epsilon)$ such that the numbers

\[
\frac 1n \log \sum_{\bi \in \Sigma_n} \varphi^s(A_\bi),\quad \frac 1n \log \sum_{\bi \in F_n(\alpha)} \phi^s(A_\bi),\textrm{ and}\quad\frac 1n \log \sum_{\bi \in F_n(\alpha)} \phi^s(A(\alpha))^n
\]
are all $o(\varepsilon, n)$-close to each other. The same statement holds when we restrict all these sums to $\Sigma_{n,q}$.
\end{lemma}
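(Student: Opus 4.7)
The plan is a two-step argument: first, a pigeonhole/covering argument to reduce the full sum over $\Sigma_n$ to a single block $F_n(\alpha)$; then, an application of the multiplicativity of the auxiliary potential $\varphi^s$ to replace each matrix product $A_\bi$ with the dummy matrix $A(\alpha)$ raised to the $n$th power.

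For the first step, since every $\bi\in\Sigma_n$ has a frequency vector in $\Delta$ and $E(\epsilon)$ is $\epsilon$-dense in $\Delta$, the family $\{F_n(\alpha):\alpha\in E(\epsilon)\}$ covers $\Sigma_n$ with multiplicity at most $K_d$. Hence
\[
\sum_{\bi\in\Sigma_n}\varphi^s(A_\bi) \;\le\; K_d \sum_{\alpha\in E(\epsilon)}\sum_{\bi\in F_n(\alpha)}\varphi^s(A_\bi) \;\le\; K_d N \max_{\alpha\in E(\epsilon)}\sum_{\bi\in F_n(\alpha)}\varphi^s(A_\bi),
\]
and conversely $\max_\alpha\sum_{F_n(\alpha)}\varphi^s(A_\bi) \le \sum_{\Sigma_n}\varphi^s(A_\bi)$. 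I would pick $\alpha=\alpha(n,\epsilon)\in E(\epsilon)$ attaining this maximum; taking $\tfrac 1n\log$ then shows that the quantities (1) and (2) differ by at most $\tfrac{\log(K_d N)}{n}$, which is $o(n)$ once $\epsilon$ is fixed.

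For the second step, I would use the multiplicativity of $\varphi^s$ directly. For any $\bi\in\Sigma_n$,
\[
\log\varphi^s(A_\bi)=\sum_{j=1}^n \log\varphi^s(A_{i_j}) = n\sum_{i=1}^k \fq(\bi)(i)\log\varphi^s(A_i),
\]
while expanding the definition of $A(\alpha)$ yields
\[
\log\varphi^s(A(\alpha)) = \sum_{i=1}^k \alpha(i)\log\varphi^s(A_i).
\]
For $\bi\in F_n(\alpha)$ the inequality $|\fq(\bi)(i)-\alpha(i)|<\epsilon$ therefore forces
\[
\bigl|\log\varphi^s(A_\bi)-n\log\varphi^s(A(\alpha))\bigr| \;\le\; Cn\epsilon, \qquad C:=\sum_{i=1}^k|\log\varphi^s(A_i)|,
\]
so $\varphi^s(A_\bi)=e^{\pm Cn\epsilon}\varphi^s(A(\alpha))^n$ pointwise on $F_n(\alpha)$. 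Summing over $F_n(\alpha)$ and taking $\tfrac 1n\log$ shows that (2) and (3) differ by at most $C\epsilon=o(\epsilon)$.

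The statement for $\Sigma_{n,q}$ in place of $\Sigma_n$ is obtained verbatim from the same argument, since the covering step uses only that the sets $F_n(\alpha)\cap\Sigma_{n,q}$ still cover $\Sigma_{n,q}$ with multiplicity at most $K_d$, and the multiplicativity estimate is a pointwise bound on individual words. The main (minor) obstacle is bookkeeping: one needs the \emph{same} $\alpha$ to work simultaneously for all three expressions. This is automatic with the choice above, because (2) is the chosen maximizer, (1) differs from (2) through an $\alpha$-independent pigeonhole bound, and (3) differs from (2) through a pointwise estimate valid for every $\bi\in F_n(\alpha)$.
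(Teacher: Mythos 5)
Your proof is correct and follows essentially the same route as the paper: a pigeonhole argument over the $\epsilon$-net $E(\epsilon)$ to isolate a dominant frequency block, combined with the pointwise estimate that within $F_n(\alpha)$ the multiplicative potential $\varphi^s(A_\bi)$ agrees with $\varphi^s(A(\alpha))^n$ up to a factor $e^{\pm Cn\epsilon}$ (this is precisely the paper's display \eqref{eq:freq}, which you derive explicitly from multiplicativity). The only cosmetic quibble is that your first inequality carries a superfluous factor of $K_d$ (the cover already gives $\sum_{\Sigma_n}\le\sum_\alpha\sum_{F_n(\alpha)}$ without it), but that does not affect the bound.
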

\begin{proof}
Fix $\epsilon>0$ and $n>0$. Notice that, when $|\alpha-\freq (\bi)|< \epsilon$ for $\bi\in \Sigma_n$, then
\begin{equation}\label{eq:freq}
c_1^{\epsilon n}\phi^s(A(\alpha))^n\le \phi^s(A_\bi)\le c_2^{\epsilon n}\phi^s(A(\alpha))^n,
\end{equation}
for constants $c_1, c_2>0$ that do not depend on $n$ and $\epsilon$. Furthermore, for all $\alpha \in E$
\[
\sum_{\bi\in F_n(\alpha)}\phi^s(A_\bi) \leq \sum_{\bi\in \Sigma_n}\phi^s(A_\bi) \leq \sum_{\alpha\in E}\sum_{\bi\in F_n(\alpha)}\phi^s(A_\bi).
\]
As $E$ is a finite set, there exists $\alpha$ for which
\[
\sum_{\bi\in F_n(\alpha)}\phi^s(A_\bi)\ge \frac {1}{\# E}\sum_{\bi\in \Sigma_n}\phi^s(A_\bi)
\]
and we are done. The proof for sums restricted to $\Sigma_{n,q}$ instead of $\Sigma_n$ is exactly the same.
\end{proof}

Fix $\epsilon>0$ and $n>0$. Define $\tilde{g}^s, g^s_q:\Delta\to \mathbb R$ by setting for all $\alpha\in \Delta$
\[
\tilde{g}^s(\alpha) = \frac 1n \log \sum_{\bi\in F_n(\alpha)} \phi^s(A_\bi)
\quad\textrm{and}\quad
g^s_q(\alpha) = \frac 1n \log \sum_{\bi\in F_n^q(\alpha)} \phi^s(A_\bi)
\]
where $F_n^q(\alpha)\subset \Sigma_{n,q}$ is defined analogously to $F_n(\alpha)$. Further, for $\alpha \in \Delta$, denote
\[
g^s(\alpha) = f(\alpha) + \inner{a(s)}{ \alpha}
\]
for $f(\alpha) = - \sum_{i=1}^k \alpha(i) \log \alpha(i)$ and
\[
a(s)=(\log (a_{e_1}^1\cdots (a_{e_{\ceil {s}}}^1)^{s-\floor{s}}, \dots, \log (a_{e_1}^k\cdots (a_{e_{\ceil {s}}}^k)^{s-\floor{s}}).
\]
By virtue of \eqref{eq:freq}, for $n$ large,
\begin{equation}\label{eq:tilde g}
|\tilde{g}^s(\alpha) - g^s(\alpha)|<o(\epsilon, n).
\end{equation}
Given $s\ge 0$, denote by $\alpha^s$ the point of $\Delta$ where $g^s$ achieves maximum, and by $\alpha_q^s$ the point (or one of the points, if it is not unique) of $\Delta$ where $g^s_q$ achieves maximum. Observe that those are (almost exactly) the maximizing frequencies given by Lemma \ref{lem:approximation}. Indeed, for the latter this is obvious, while for the former we have $\#F_n(\alpha)=\exp(n(-\sum_i\alpha_i\log\alpha_i)+o(\epsilon))$, hence maximizing $g^s$ means (almost) maximizing the sum $\sum_{\bi\in F_n(\alpha)} \varphi^s(A_\bi)$.

\begin{lemma} \label{lem:concave}
For any $s, t$, there exists $w=w_s>0$ depending on only one of the parameters, such that
\[
|\alpha^{s}-\alpha^{t}| \leq \frac {|a(s)-a(t)|} {2w}
\]
and
\[
g^{s}(\alpha^{s}) \ge g^{s}(\alpha^{t}) + \frac {|a(s)-a(t)|^2} {4w}.
\]
\end{lemma}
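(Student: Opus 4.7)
The lemma is a stability-of-maximizers estimate for the concave family $g^s(\alpha)=f(\alpha)+\langle a(s),\alpha\rangle$ on $\Delta$, where $f(\alpha)=-\sum_i\alpha_i\log\alpha_i$ is the entropy. My plan exploits three structural facts. First, $f$ is strictly concave with Hessian $-\operatorname{diag}(1/\alpha_i)$, which, when restricted to the tangent hyperplane $V=\{v\in\mathbb{R}^k:\sum_i v_i=0\}$ of $\Delta$, has eigenvalues comparable to $1/\alpha_i$ and is therefore bounded above and below on any compact subset of $\operatorname{int}(\Delta)$. Second, because $s$ enters $g^s$ only through a linear term, $g^s$ shares the Hessian of $f$ while the difference between $g^s$ and $g^t$ is the linear functional $\langle a(s)-a(t),\cdot\rangle$. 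Third, the maximizer $\alpha^s$ lies in $\operatorname{int}(\Delta)$ (the barrier $|\nabla f|\to\infty$ at the boundary prevents escape), so the Lagrange condition yields $\alpha^s_i=e^{a(s)_i}/\sum_j e^{a(s)_j}$ and, in particular, $\nabla g^s(\alpha^s)\in\operatorname{span}(\mathbf{1})$, which is orthogonal to $V$. I would choose $w_s>0$ so that on a compact neighborhood $K_s\subset\operatorname{int}(\Delta)$ of $\alpha^s$ the restriction of $-\nabla^2 g^s$ to $V$ is sandwiched between $2w_s I$ and $2w_s I$; the substantive regime is when $\alpha^t\in K_s$, which is the one needed in the intended application to $t=t_q$ near $s=s_0$.

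For the first inequality I would apply strong concavity at both optima. Since $\alpha^s-\alpha^t\in V$ is orthogonal to both $\nabla g^s(\alpha^s)$ and $\nabla g^t(\alpha^t)$, the first-order terms vanish and
\[
g^s(\alpha^t)\le g^s(\alpha^s)-w_s|\alpha^s-\alpha^t|^2,\qquad g^t(\alpha^s)\le g^t(\alpha^t)-w_s|\alpha^s-\alpha^t|^2.
\]
Adding these and using the linear identity $g^s-g^t=\langle a(s)-a(t),\cdot\rangle$, the left-hand side telescopes to $\langle a(s)-a(t),\alpha^s-\alpha^t\rangle$, giving
\[
\langle a(s)-a(t),\alpha^s-\alpha^t\rangle\ge 2w_s|\alpha^s-\alpha^t|^2,
\]
after which Cauchy-Schwarz delivers $|\alpha^s-\alpha^t|\le|a(s)-a(t)|/(2w_s)$.

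For the second inequality I would invoke the complementary upper Hessian bound $-\nabla^2 g^s|_V\le 2w_s I$, equivalently the smoothness of $-g^s$ with constant $2w_s$. Taking a single step from $\alpha^t$ in the direction of the tangent-projected gradient and optimizing the step size yields the descent-lemma bound
\[
g^s(\alpha^s)-g^s(\alpha^t)\ge \frac{1}{4w_s}\bigl|\nabla^V g^s(\alpha^t)\bigr|^2.
\]
Since $\nabla g^t(\alpha^t)\in\operatorname{span}(\mathbf{1})$ is itself orthogonal to $V$, one has $\nabla^V g^s(\alpha^t)=P_V\bigl(a(s)-a(t)\bigr)$; because neither $\alpha^s$ nor the gap $g^s(\alpha^s)-g^s(\alpha^t)$ changes when $a(s)$ is shifted by a multiple of $\mathbf{1}$, one may assume $a(s)-a(t)\in V$ without loss of generality, so $|P_V(a(s)-a(t))|=|a(s)-a(t)|$ and the stated inequality follows. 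The main obstacle, and where the constant $w_s$ must be chosen with care, is arranging for a single $w_s$ to simultaneously serve as strong-concavity modulus (for the first bound) and as half the Lipschitz-gradient constant (for the second); this requires two-sided control of $-\nabla^2 g^s|_V$ on $K_s$, which is possible precisely because $\alpha^s\in\operatorname{int}(\Delta)$ and the entropy Hessian has both upper and lower bounds there.
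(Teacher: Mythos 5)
Your proof of the first inequality is correct and, in fact, a cleaner route than the paper's. The paper proves this inequality by contradiction, applying the strong concavity estimate \eqref{eqn:concave} at the single maximizer $\alpha^s$ and then showing that $|\alpha^s-\alpha^t|>|a(s)-a(t)|/(2w)$ would violate the maximality of $\alpha^t$. Your symmetric ``monotone operator'' argument --- applying \eqref{eqn:concave} at both $\alpha^s$ and $\alpha^t$, adding, and telescoping via $g^s-g^t=\langle a(s)-a(t),\cdot\rangle$ before Cauchy--Schwarz --- is shorter and produces the factor $2w$ more transparently. The two approaches use the same ingredients (strong concavity, linearity of $g^s-g^t$ in $\alpha$, interiority of the maximizers so that the first-order term vanishes on $V$), so the difference is one of arrangement rather than substance; but your version is the one I would keep.

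The proof of the second inequality, however, has a genuine gap that you yourself half-identify and then wave away. The constant $w=w_s$ introduced by the paper, and used for the first inequality, is a \emph{strong-concavity} modulus: $-\nabla^2 g^s|_V \geq 2wI$. Your descent-lemma bound $g^s(\alpha^s)-g^s(\alpha^t)\geq\frac{1}{4w}|\nabla^V g^s(\alpha^t)|^2$ requires the \emph{opposite} control, namely a gradient-Lipschitz (smoothness) constant $-\nabla^2 g^s|_V\leq 2wI$. Requiring the same $w$ to play both roles forces $-\nabla^2 g^s|_V$ to equal $2w I$ identically on the region of interest, which is what the phrase ``sandwiched between $2w_sI$ and $2w_sI$'' literally demands; this never holds for the entropy Hessian unless $k=2$ and $\alpha^s$ is the uniform measure. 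With a generic two-sided bound $2w_1 I\leq -\nabla^2 g^s|_V\leq 2w_2I$, the first inequality holds with $w_1$ and the second with $w_2$, and one cannot take $w_1=w_2$. There is also a secondary issue: even granting a smoothness constant, your bound produces $|P_V(a(s)-a(t))|^2/(4w)$, and the normalization ``assume $a(s)-a(t)\in V$'' changes $|a(s)-a(t)|$, which appears in the statement, so it is not a harmless reduction. For what it is worth, the paper's own treatment of this second inequality is also suspect: it is dismissed with ``immediate from here,'' but it does not follow from \eqref{eqn:concave} or from the first inequality (indeed the first inequality points the wrong way for this purpose), and the second inequality is never actually invoked anywhere in the paper --- only \eqref{eqn:ss1}, which uses the first inequality, is needed. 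You should either prove the second inequality under an explicit two-sided Hessian hypothesis with a distinct constant, or note that it is unused and drop it.
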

\begin{proof}
Note that as a function of $\alpha$, the function $g^s: \Delta\to \R$ is strictly concave for every $s$, so that there exists a number $w=w_s>0$ such that for the second differential in direction $e$,
\[
\inf_{\alpha, e} D^2_e g^s(\alpha) \leq -2w <0.
\]
That means that
\begin{equation} \label{eqn:concave}
g^s(\alpha) \leq g^s(\alpha^s) - w|\alpha-\alpha^s|^2.
\end{equation}
Next fix $t$ and $s$ and notice that
\begin{align*}
g^t(\alpha^t)= f(\alpha^t)+\inner{a(t)}{\alpha^t}&=g^s(\alpha^t)+\inner{(a(s) - a(t))}{\alpha^t}\\
&\le g^s(\alpha^s) - w|\alpha^t - \alpha^s|^2+\inner{(a(t) - a(s))}{\alpha^t}.
\end{align*}
If the first claim does not hold, that is, $|\alpha^t - \alpha^s|> |a(s) - a(t)|/(2w)$, we obtain from the above
\[
g^t(\alpha^t)<g^s(\alpha^s) - \frac{|a(s) - a(t)|^2}{4w}+\inner{(a(s) - a(t))}{\alpha^t}<g^t(\alpha^s),
\]
which is a contradiction with the maximality of $\alpha^t$. The second claim is immediate from here.
\end{proof}

\begin{lemma}\label{lem:com}
There is a constant $L$ such that for all $s, t\ge 0$,
\[
g^s(\alpha^s) - g^t(\alpha^t)\le L|a(s) - a(t)|.
\]
\end{lemma}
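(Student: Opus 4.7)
The plan is to exploit the two ingredients in the definition of $g^s$: a concave entropy term $f(\alpha)$ that does \emph{not} depend on the parameter $s$, and a linear term $\langle a(s),\alpha\rangle$ that depends on $s$ only through the vector $a(s)$. Since $f$ is the same for both $g^s$ and $g^t$, the difference $g^s(\alpha)-g^t(\alpha)$ is purely linear in $\alpha$ and equals $\langle a(s)-a(t),\alpha\rangle$, uniformly in $\alpha$.

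My first step is to sandwich $g^s(\alpha^s)-g^t(\alpha^t)$ by inserting an intermediate value. Specifically, I would write
\[
g^s(\alpha^s) - g^t(\alpha^t) \;=\; \bigl[g^s(\alpha^s) - g^t(\alpha^s)\bigr] + \bigl[g^t(\alpha^s) - g^t(\alpha^t)\bigr].
\]
The second bracket is nonpositive because $\alpha^t$ is by definition a maximizer of $g^t$ on $\Delta$, so it can be discarded for the upper bound. This leaves me with the purely linear quantity $g^s(\alpha^s)-g^t(\alpha^s)=\langle a(s)-a(t),\alpha^s\rangle$.

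To finish, I would apply the Cauchy--Schwarz inequality and use that $\alpha^s\in\Delta$ is a probability vector, hence $|\alpha^s|\le 1$ in the Euclidean norm (or bounded by a dimension-dependent constant in any equivalent norm). This yields
\[
g^s(\alpha^s) - g^t(\alpha^t) \;\le\; \langle a(s)-a(t),\alpha^s\rangle \;\le\; |a(s)-a(t)|\cdot|\alpha^s| \;\le\; L\,|a(s)-a(t)|
\]
for some absolute constant $L$ depending only on the choice of norm (in fact one can take $L=1$ for the Euclidean norm).

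I do not see a genuine obstacle here: the argument is just the standard trick that the Legendre-type quantity $s\mapsto \sup_\alpha[f(\alpha)+\langle a(s),\alpha\rangle]$ is $1$-Lipschitz in $a(s)$ because the sup of a family of affine functions of $a$ with uniformly bounded slopes is itself Lipschitz in $a$ with the same bound. The only mild point to check is that $\alpha^s$ exists (which follows from continuity of $g^s$ and compactness of $\Delta$, already used implicitly in Lemma~\ref{lem:concave}) and that the norm on the $\alpha$-side is controlled, which is immediate from $\alpha\in\Delta$.
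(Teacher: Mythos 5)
Your argument is exactly the paper's: you split off $g^t(\alpha^s)-g^t(\alpha^t)\le 0$ by maximality of $\alpha^t$, observe that the remaining piece $g^s(\alpha^s)-g^t(\alpha^s)=\inner{a(s)-a(t)}{\alpha^s}$ is linear because the entropy term cancels, and bound it using boundedness of $\Delta$. The only cosmetic difference is that you explicitly invoke Cauchy--Schwarz and note $L=1$ suffices, whereas the paper simply cites compactness of $\Delta$.
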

\begin{proof}
By the definition of $g^s$ and compactness of $\Delta$, there is an $L$ such that
\[
g^{s}(\alpha^s) - g^t(\alpha^s)=\inner{a(s) - a(t)}{\alpha^s}\le L|a(s) - a(t)|.
\]
Furthermore, by maximality of $\alpha^t$,
\[
g^s(\alpha^s) - g^t(\alpha^s)\ge g^s(\alpha^s) - g^t(\alpha^t).
\]
\end{proof}

\begin{lemma} \label{lem:aprox}
The functions $g^s$ and $g^s_q$ are good approximations to $P(s)$ and $P_q(s)$. That is,
\[
P(s) = g^s(\alpha^s) + o(\epsilon, n) \quad\textrm{and}\quad P_q(s) = g^s_q(\alpha^s_q) + o(\epsilon, n) .
\]
\end{lemma}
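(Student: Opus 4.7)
The plan is to prove the two estimates by a standard two-sided squeeze, once for the full pressure and then identically for the reduced pressure.

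For the upper bound on $g^s(\alpha^s)$, I would observe that for every $\alpha \in \Delta$ we trivially have
\[
\tilde g^s(\alpha) \;=\; \tfrac{1}{n}\log\sum_{\bi\in F_n(\alpha)}\phi^s(A_\bi) \;\leq\; \tfrac{1}{n}\log\sum_{\bi\in \Sigma_n}\phi^s(A_\bi) \;=\; P(s) + o(n),
\]
since the sets $F_n(\alpha)$ are subsets of $\Sigma_n$ and the pressure is defined as a limit. Combining this with the universal comparison \eqref{eq:tilde g} applied at $\alpha = \alpha^s$ yields $g^s(\alpha^s) \leq P(s) + o(\epsilon, n)$.

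For the lower bound, I would invoke Lemma \ref{lem:approximation}: there exists some $\alpha_0 \in E(\epsilon)$ such that $\tilde g^s(\alpha_0)$ is $o(\epsilon, n)$-close to $\frac 1n \log \sum_{\bi \in \Sigma_n}\phi^s(A_\bi) = P(s) + o(n)$. Applying \eqref{eq:tilde g} at $\alpha_0$ gives $g^s(\alpha_0) \geq P(s) - o(\epsilon, n)$. Since by definition $\alpha^s$ maximizes $g^s$ on $\Delta$, we conclude
\[
g^s(\alpha^s) \;\geq\; g^s(\alpha_0) \;\geq\; P(s) - o(\epsilon, n).
\]
The two estimates together give the first claim.

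The second claim is proved by precisely the same argument with $\Sigma_n$ replaced by $\Sigma_{n,q}$, $F_n(\alpha)$ replaced by $F_n^q(\alpha)$, $\tilde g^s$ replaced by $g_q^s$, and $\alpha^s$ by $\alpha_q^s$: the one-sided inclusion $F_n^q(\alpha)\subset\Sigma_{n,q}$ still gives the upper bound, and Lemma \ref{lem:approximation} was explicitly stated to also hold when sums are restricted to $\Sigma_{n,q}$, which supplies the matching lower bound. There is no real obstacle here beyond checking that the pointwise estimate \eqref{eq:freq}, and hence \eqref{eq:tilde g}, holds uniformly for all $\alpha\in\Delta$ (not just $\alpha\in E(\epsilon)$), so that we are entitled to apply it at $\alpha^s$ and $\alpha^s_q$; this is built into the definition of $F_n(\alpha)$ since \eqref{eq:freq} only uses that $|\alpha-\freq(\bi)|<\epsilon$, which is the defining property of $F_n(\alpha)$ for \emph{any} $\alpha$.
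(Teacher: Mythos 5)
Your proof is correct and takes essentially the same approach as the paper's. Both rely on the sandwich $\max_\alpha \sum_{\bi\in F_n(\alpha)}\phi^s(A_\bi) \leq \sum_{\bi\in\Sigma_n}\phi^s(A_\bi) \leq \#E(\epsilon)\cdot\max_\alpha \sum_{\bi\in F_n(\alpha)}\phi^s(A_\bi)$ (which is precisely what powers Lemma~\ref{lem:approximation}), combined with~\eqref{eq:tilde g} to pass between $\tilde g^s$ and $g^s$; the only cosmetic difference is that the paper writes out the three-term chain of inequalities directly and then invokes~\eqref{eq:tilde g} once, while you split it into an upper bound from the inclusion $F_n(\alpha)\subset\Sigma_n$ and a lower bound from Lemma~\ref{lem:approximation}. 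Your closing remark about~\eqref{eq:freq} holding for arbitrary $\alpha\in\Delta$ (not only $\alpha\in E(\epsilon)$) correctly identifies the one place where a little care is needed, and the justification you give is right.
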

\begin{proof}
The second part of the assertion follows from
\[
\max_\alpha e^{n g^s_q(\alpha)} \leq \sum_{\bi\in \Sigma_q^n} \phi^s(T_\bi) \leq \sum_{\alpha\in E(\epsilon)} e^{n g^s_q(\alpha)} \leq \epsilon^{1-k} \cdot \max_{\alpha\in \Delta} e^{n g^s_q(\alpha)}.
\]
This calculation also applies to $\tilde g^s$, and by \eqref{eq:tilde g} $g^s$ can be approximated $o(\epsilon, n)$-closely by $\tilde g^s$.
\end{proof}

\begin{lemma} \label{lem:delta0} Let $s_0$ satisfy $P(s_0)=0$. The distance between the frequencies maximizing $g^{s_0}$ and $g^{s_0}_q$ is controlled by $P_q(s_0)$. That is,
\[
|\alpha^{s_0} - \alpha^{s_0}_q| \leq \left(\frac {-P_q(s_0)} {w_{s_0}}\right)^{1/2} + o(\epsilon, n).
\]
\end{lemma}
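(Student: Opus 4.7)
The plan is to combine the strict concavity of $g^{s_0}$ (as exploited already in Lemma \ref{lem:concave}) with the two-sided comparison of $g^{s_0}$ and $g^{s_0}_q$ provided by Lemma \ref{lem:aprox} and \eqref{eq:tilde g}, evaluated at the point $\alpha^{s_0}_q$. The assumption $P(s_0)=0$ fixes the value of $g^{s_0}(\alpha^{s_0})$ up to an $o(\epsilon,n)$ error, and the bound $-P_q(s_0)\ge 0$ (which holds once $q$ is large enough that $P_q(s_0)\le P(s_0)=0$; note $P_q \le P$ since $\Sigma_{n,q}\subset \Sigma_n$) will play the role of the deficit that forces $\alpha^{s_0}_q$ to be near $\alpha^{s_0}$.

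First, from Lemma \ref{lem:aprox} and $P(s_0)=0$ we get $g^{s_0}(\alpha^{s_0}) = o(\epsilon,n)$, while $g^{s_0}_q(\alpha^{s_0}_q) = P_q(s_0) + o(\epsilon,n)$. Next I would compare $g^{s_0}_q(\alpha^{s_0}_q)$ with $g^{s_0}(\alpha^{s_0}_q)$: since $F_n^q(\alpha)\subset F_n(\alpha)$ we have $g^{s_0}_q(\alpha)\le \tilde g^{s_0}(\alpha)$ for every $\alpha$, and by \eqref{eq:tilde g} this in turn satisfies $\tilde g^{s_0}(\alpha) \le g^{s_0}(\alpha) + o(\epsilon,n)$. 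Applying this at $\alpha=\alpha^{s_0}_q$ yields
\[
P_q(s_0) + o(\epsilon,n) \;=\; g^{s_0}_q(\alpha^{s_0}_q) \;\le\; g^{s_0}(\alpha^{s_0}_q) + o(\epsilon,n).
\]

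Now I would insert the concavity estimate \eqref{eqn:concave} from the proof of Lemma \ref{lem:concave} at $\alpha=\alpha^{s_0}_q$:
\[
g^{s_0}(\alpha^{s_0}_q) \;\le\; g^{s_0}(\alpha^{s_0}) - w_{s_0}\,|\alpha^{s_0}_q - \alpha^{s_0}|^2 \;=\; -\,w_{s_0}\,|\alpha^{s_0}_q - \alpha^{s_0}|^2 + o(\epsilon,n).
\]
Combining the last two displays gives
\[
w_{s_0}\,|\alpha^{s_0}_q-\alpha^{s_0}|^2 \;\le\; -P_q(s_0) + o(\epsilon,n),
\]
after which the assertion follows by taking square roots and using $\sqrt{a+b}\le \sqrt a+\sqrt b$ for nonnegative $a,b$, absorbing the residual $\sqrt{o(\epsilon,n)/w_{s_0}}$ into the term $o(\epsilon,n)$.

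The only delicate point is bookkeeping of the error terms: one must check that the chain of inequalities comparing $g^{s_0}_q$ to $g^{s_0}$ through $\tilde g^{s_0}$ only introduces $o(\epsilon,n)$ errors (this is exactly the content of \eqref{eq:tilde g} plus the trivial set inclusion), and that the square-root step does not spoil the separation between the leading term and the error. No new ideas beyond concavity and the approximation lemmas are needed.
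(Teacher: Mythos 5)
Your argument is correct and is essentially the same as the paper's: both rest on the approximation $P,P_q \approx g,g_q$ from Lemma \ref{lem:aprox}, the strict concavity bound \eqref{eqn:concave}, and the trivial inequality $g^{s_0}_q(\alpha) \le \tilde g^{s_0}(\alpha) \le g^{s_0}(\alpha)+o(\epsilon,n)$ coming from $F_n^q(\alpha)\subset F_n(\alpha)$ and \eqref{eq:tilde g}. The paper arranges the chain slightly differently (it writes $g^{s_0}_q(\alpha^{s_0}_q)$ at both ends and then cancels), but the underlying inequalities and the square-root bookkeeping are identical to yours.
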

\begin{proof}
Notice that by Lemma \ref{lem:aprox} and \eqref{eqn:concave}
\begin{align*}
g^{s_0}_q(\alpha^{s_0}_q)&=P_q(s_0)+o(\epsilon, n)=P(s_0)+P_q(s_0)+o(\epsilon, n)\\
&=g^{s_0}(\alpha^{s_0})+P_q(s_0)+o(\epsilon, n)\\
&\ge g^{s_0}(\alpha^{s_0}_q)+w_{s_0}(\alpha_q^{s_0} - \alpha^{s_0})^2+P_q(s_0)+o(\epsilon, n).
\end{align*}
Solve for $|\alpha_q^{s_0} - \alpha^{s_0}|$ and recall that
\[
g^{s_0}_q(\alpha^{s_0}_q)\le g^{s_0}(\alpha_q^{s_0}),
\]
(because the sum in definition of $g^s_q$ is over a smaller set $F_n^q$) to arrive at the conclusion.
\end{proof}

For the rest of the section, fix $s_0$ to satisfy $P(s_0)=0$ and define $\tilde t=\tilde t_q$ through
\[
P_q(s_0)+\inner{(a(\tilde t) - a(s_0))}{\alpha^{s_0}}=0.
\]
\begin{remark}\label{rem:astoP}
Notice that
\[
\inner{(a(\tilde t) - a(s_0))}{\alpha^{s_0}}=(\tilde t - s_0)\inner{(\log a^1_{e_{\ceil {s_0}}}, \dots, \log a^k_{e_{\ceil {s_0}}})}{\alpha^{s_0}}.
\]
Furthermore, from the definition of $\tilde t$,
\[
\tilde t - s_0=\frac{-P_q(s_0)}{\inner{(\log a^1_{e_{\ceil {s_0}}}, \dots, \log a^k_{e_{\ceil {s_0}}})}{\alpha^{s_0}}}.
\]

\end{remark}
In order to prove Theorem \ref{thm:main} we need to compare $s_0$ and $t_q$. By the above remark, in fact it suffices to compare $\tilde t$ and $t_q$. The next Lemma gives us a tool to do that.
\begin{lemma}\label{lem:MVT}
There are constants $0<c\le C<\infty$ such that
\[
c|P_q(\tilde t)|\le| t_q - \tilde t| \le C|P_q(\tilde t)|.
\]
\end{lemma}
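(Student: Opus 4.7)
The plan is to cast the statement as an analogue of the mean value theorem for $P_q$, exploiting the approximation $P_q(s) \approx g^s_q(\alpha^s_q)$ given by Lemma~\ref{lem:aprox} together with the fact that $a(s)$ is piecewise affine in $s$. The main input is the following two-sided sandwich: since $\alpha^s_q$ maximises $g^s_q(\alpha) = f(\alpha) + \langle a(s), \alpha\rangle$ on $\Delta$, for any $s, s'$ one has
\[
\langle a(s) - a(s'),\, \alpha^{s'}_q\rangle \;\le\; g^s_q(\alpha^s_q) - g^{s'}_q(\alpha^{s'}_q) \;\le\; \langle a(s) - a(s'),\, \alpha^s_q\rangle,
\]
the lower bound from $g^s_q(\alpha^s_q) \ge g^s_q(\alpha^{s'}_q) = g^{s'}_q(\alpha^{s'}_q) + \langle a(s)-a(s'), \alpha^{s'}_q\rangle$ and the upper bound from the symmetric inequality $g^{s'}_q(\alpha^{s'}_q) \ge g^{s'}_q(\alpha^s_q)$. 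Setting $s = \tilde t$, $s' = t_q$ and using $P_q(t_q) = 0$ together with Lemma~\ref{lem:aprox} (after sending $n \to \infty$ and $\epsilon \to 0$) yields
\[
\langle a(\tilde t) - a(t_q),\, \alpha^{t_q}_q\rangle \;\le\; P_q(\tilde t) \;\le\; \langle a(\tilde t) - a(t_q),\, \alpha^{\tilde t}_q\rangle.
\]

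I would then plug in the explicit form of $a$. By Remarks~\ref{rem:close dim} and~\ref{rem:astoP}, both $t_q$ and $\tilde t$ converge to $s_0$ as $q \to \infty$. Provided $s_0$ is not an integer (the integer case being treated identically by restricting to one side of $s_0$), for all large $q$ the interval $[\min(\tilde t, t_q), \max(\tilde t, t_q)]$ sits in a single piece on which $\lceil s\rceil$ is constant, so $a$ is affine there and
\[
a(\tilde t) - a(t_q) = (\tilde t - t_q)\,\bigl(\log a^1_{e_{\lceil s_0\rceil}}, \dots, \log a^k_{e_{\lceil s_0\rceil}}\bigr).
\]
Each $\log a^i_{e_{\lceil s_0\rceil}}$ is strictly negative (as the matrices are contracting) and bounded, so for any probability vector $\alpha$,
\[
m \;\le\; \bigl|\langle a(\tilde t) - a(t_q), \alpha\rangle\bigr| / |\tilde t - t_q| \;\le\; M,
\]
where $m = \min_i |\log a^i_{e_{\lceil s_0\rceil}}| > 0$ and $M = \max_i |\log a^i_{e_{\lceil s_0\rceil}}| < \infty$, both independent of $q$. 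Feeding this into the preceding sandwich gives $c|P_q(\tilde t)| \le |t_q - \tilde t| \le C|P_q(\tilde t)|$ with $c = 1/M$ and $C = 1/m$.

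The main technical subtlety is handling the $o(\epsilon, n)$ error from Lemma~\ref{lem:aprox}: one must ensure that this error does not swamp the quantities $P_q(\tilde t)$ and $\tilde t - t_q$. This is dealt with in the standard way, by taking $n \to \infty$ first (for fixed $q$ and $\epsilon$) so that $g^s_q(\alpha^s_q)\to P_q(s)$ exactly, and then $\epsilon \to 0$, with $\alpha^s_q$ viewed as a limit point in the compact simplex $\Delta$; the inequalities above then become exact. A smaller issue, noted above, is that $\lceil s \rceil$ could jump between $\tilde t$ and $t_q$ only if $s_0$ is an integer, and in that case the same argument applies to one-sided versions of the bounds and yields the same conclusion.
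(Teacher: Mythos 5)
Your proof is correct, but it takes a heavier route than the paper's own one-line argument. The paper proves the lemma directly: since for every $\bi\in\Sigma_{n,q}$ the ratio $\phi^{t+\epsilon}(A_\bi)/\phi^t(A_\bi)$ lies between $b^{\epsilon n}$ and $B^{\epsilon n}$, with $0<b\le B<1$ the smallest and largest relevant diagonal entries, the one-sided derivatives of the convex function $P_q$ are uniformly bounded between $\log B$ and $\log b$ in a neighbourhood of $s_0$; combined with $P_q(t_q)=0$, the mean value theorem for convex functions gives the two-sided bound immediately. You arrive at the same derivative bound via the $g^s_q$ optimisation machinery of Lemmas \ref{lem:approximation}--\ref{lem:aprox}: the sandwich coming from optimality of $\alpha^s_q$, the piecewise-affine structure of $a(\cdot)$, and the $o(\epsilon,n)$ bookkeeping. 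Both arguments reduce to the fact that the slope of $P_q$ near $s_0$ is pinned between the logs of the diagonal entries; the paper's version is self-contained, needs no appeal to Lemma \ref{lem:aprox}, and sidesteps the order-of-limits subtlety you had to address, which is why it is preferable here. One small formal slip in your write-up: you identify $g^s_q(\alpha)$ with $f(\alpha) + \langle a(s),\alpha\rangle$, but that decomposition is reserved for $g^s$; the count $\# F_n^q(\alpha)$ is not $e^{nf(\alpha)}$ because the hole removes words. This is harmless, since your sandwich uses only the \emph{difference} $g^s_q(\alpha) - g^{s'}_q(\alpha) = \langle a(s)-a(s'),\alpha\rangle + o(\epsilon,n)$, in which the $q$-dependent count cancels, but the statement as written is not literally correct.
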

\begin{proof}
It is standard to check that there are $0<b\le B<\infty$ such that for all $\epsilon>0$, $n$,
\[
b^{\epsilon n}\le \frac{\sum_{\bi\in \Sigma_{n,q}}\phi^{t+\epsilon}(A_\bi)}{\sum_{\bi\in \Sigma_{n,q}}\phi^t(A_\bi)}\le B^{\epsilon n}.
\]
It follows that there are $0<c\le C<\infty$ such that for all $t$ between $\tilde t$ and $t_q$, the absolute value of the left and right derivatives of $P_q$ at $t$ are all bounded from below by $c$ and above by $C$. The left and right derivatives exist at all points by convexity of $P_q$. Hence, recalling $P_q(t_q)=0$, the claim follows.
\end{proof}

In the remainder of the section, instead of writing down explicit constants, we will use the notation $O(-P_q(s_0))$ to mean a function of the form $C(-P_q(s_0))$ where the constant $C>0$ can be chosen to be independent of $q, n$ and $\epsilon$.
\begin{proposition}\label{prop:lower}
The quantity $t_q - \tilde t$ has a lower bound in terms  of $P_q(s_0)$, namely
\[
t_q - \tilde t\ge -O(-P_q(s_0)^{3/2}).
\]
\end{proposition}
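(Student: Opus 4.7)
The plan is to reduce the comparison of $t_q$ with $\tilde t$ to an estimate of $P_q(\tilde t)$ via Lemma \ref{lem:MVT}, and then to bound $P_q(\tilde t)$ from below by evaluating the approximating functional $g^{\tilde t}_q$ at the suboptimal frequency $\alpha^{s_0}_q$. Since $P_q$ is strictly decreasing with unique zero $t_q$, showing $P_q(\tilde t) \ge -O((-P_q(s_0))^{3/2})$ is what is needed: if this quantity is nonnegative then $t_q \ge \tilde t$ and the inequality is trivial, while if it is negative then Lemma \ref{lem:MVT} converts the lower bound on $P_q(\tilde t)$ directly into the desired lower bound on $t_q - \tilde t$.

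The core estimate goes as follows. By Lemma \ref{lem:aprox},
\[
P_q(\tilde t) \ge g^{\tilde t}_q(\alpha^{s_0}_q) + o(\epsilon, n),
\]
since $\alpha^{\tilde t}_q$ maximizes $g^{\tilde t}_q$. Using the explicit form $g^s(\alpha) = f(\alpha) + \langle a(s), \alpha\rangle$, the entropy term $f(\alpha^{s_0}_q)$ cancels when switching parameter from $s_0$ to $\tilde t$, giving
\[
g^{\tilde t}_q(\alpha^{s_0}_q) = g^{s_0}_q(\alpha^{s_0}_q) + \langle a(\tilde t) - a(s_0), \alpha^{s_0}_q\rangle.
\]
I split the second term by writing $\alpha^{s_0}_q = \alpha^{s_0} + (\alpha^{s_0}_q - \alpha^{s_0})$. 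The main piece $g^{s_0}_q(\alpha^{s_0}_q) + \langle a(\tilde t) - a(s_0), \alpha^{s_0}\rangle$ equals $P_q(s_0) + o(\epsilon, n) + \langle a(\tilde t) - a(s_0), \alpha^{s_0}\rangle$, which vanishes up to $o(\epsilon, n)$ by the defining equation of $\tilde t$.

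The remaining contribution is the cross term $\langle a(\tilde t) - a(s_0), \alpha^{s_0}_q - \alpha^{s_0}\rangle$. Cauchy-Schwarz gives the bound $|a(\tilde t) - a(s_0)| \cdot |\alpha^{s_0}_q - \alpha^{s_0}|$. For the first factor, Remark \ref{rem:astoP} yields $|a(\tilde t) - a(s_0)| = |\tilde t - s_0| \cdot |(\log a^i_{e_{\lceil s_0\rceil}})_i| = O(-P_q(s_0))$, and for the second factor Lemma \ref{lem:delta0} gives $|\alpha^{s_0}_q - \alpha^{s_0}| \le O((-P_q(s_0))^{1/2}) + o(\epsilon, n)$. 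Multiplying, the cross term is at worst $-O((-P_q(s_0))^{3/2})$, so
\[
P_q(\tilde t) \ge -O\bigl((-P_q(s_0))^{3/2}\bigr) + o(\epsilon, n).
\]
Letting $\epsilon \to 0$ and $n \to \infty$ removes the auxiliary error, and then Lemma \ref{lem:MVT} gives $\tilde t - t_q \le C|P_q(\tilde t)| \le O((-P_q(s_0))^{3/2})$, which is exactly the claim.

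The only genuinely delicate step is the bookkeeping in step three: one must verify that after the cancellation of the linearization, the only surviving term is the cross term, and that the $o(\epsilon, n)$ errors picked up from applying Lemma \ref{lem:aprox} twice are harmless in the limit. The sign analysis for the final case split, deciding whether $t_q \gtreqless \tilde t$ based on the sign of $P_q(\tilde t)$, also needs to be handled carefully using the strict monotonicity of $P_q$ (which is itself guaranteed by the uniform derivative bounds in the proof of Lemma \ref{lem:MVT}).
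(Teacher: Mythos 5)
Your proof is correct and follows essentially the same route as the paper's: you lower-bound $P_q(\tilde t)$ by evaluating $g^{\tilde t}_q$ at the suboptimal frequency $\alpha^{s_0}_q$, pass to $g^{s_0}_q(\alpha^{s_0}_q) + \inner{a(\tilde t)-a(s_0)}{\alpha^{s_0}_q}$, split off the cross term so that the main part cancels via the defining equation of $\tilde t$, estimate the cross term by Lemma~\ref{lem:delta0} and Remark~\ref{rem:astoP}, and finish with Lemma~\ref{lem:MVT}. The extra detail you add (explicit Cauchy--Schwarz and the sign case split on $P_q(\tilde t)$ before invoking Lemma~\ref{lem:MVT}) is a sound and slightly more careful presentation of what the paper leaves implicit.
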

\begin{proof}
Notice that by Lemma \ref{lem:aprox}, the definition of $\tilde t$, Remark \ref{rem:astoP} and Lemma \ref{lem:delta0}
\begin{align*}
P_q(\tilde t)&\ge g^{\tilde t}_q(\alpha^{s_0}_q)+o(\epsilon, n)=g^{s_0}_q(\alpha_q^{s_0})+\inner{(a(\tilde t) - a(s_0))}{\alpha^{s_0}_q}+o(\epsilon, n)\\
&=P_q(s_0)+\inner{(a(\tilde t) - a(s_0))}{\alpha^{s_0}} + \inner{(a(\tilde t) - a(s_0))}{(\alpha^{s_0}_q-\alpha^{s_0})}+o(\epsilon, n)\\
&\ge \inner{(a(\tilde t) - a(s_0))}{(\alpha_q^{s_0} - \alpha^{s_0})}+o(\epsilon, n).
\end{align*}
By Lemma \ref{lem:delta0} and Remark \ref{rem:astoP} this yields
\[
P_q(\tilde t)\ge -O(-P_q(s_0)^{3/2})+o(\epsilon, n).
\]
Finally, apply Lemma \ref{lem:MVT} and
 let $\epsilon\to 0$ and $n\to \infty$.
\end{proof}

\begin{lemma}\label{lem:delta3}
The distance between $\alpha_q^{s_0}$ and $\alpha^{\tilde t}_q$ is controlled by $P_q(s_0)$, namely
\[
|\alpha^{s_0}_q - \alpha^{\tilde t}_q|\le O((-P_q(s_0))^{1/2})+o(\epsilon, n).
\]
\end{lemma}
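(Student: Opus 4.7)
The plan is to insert the full-pressure maximizers as intermediate points and apply the triangle inequality
\[
|\alpha^{s_0}_q - \alpha^{\tilde t}_q| \le |\alpha^{s_0}_q - \alpha^{s_0}| + |\alpha^{s_0} - \alpha^{\tilde t}| + |\alpha^{\tilde t} - \alpha^{\tilde t}_q|,
\]
and to bound each term on the right by $O((-P_q(s_0))^{1/2}) + o(\epsilon,n)$. The first term is precisely Lemma \ref{lem:delta0}. The middle term is handled by Lemma \ref{lem:concave}, which gives $|\alpha^{s_0} - \alpha^{\tilde t}| \le |a(s_0)-a(\tilde t)|/(2w)$; since $a(\cdot)$ is piecewise affine, $|a(s_0)-a(\tilde t)| = O(|\tilde t - s_0|) = O(-P_q(s_0))$ by Remark \ref{rem:astoP}, which is strictly dominated by the desired square-root bound once $q$ is large.

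The third term is where the real work lies. The idea is to repeat the proof of Lemma \ref{lem:delta0} verbatim, but with $\tilde t$ in place of $s_0$. Strict concavity of $g^{\tilde t}$ about $\alpha^{\tilde t}$ (Lemma \ref{lem:concave}), Lemma \ref{lem:aprox}, and the pointwise inequality $g^{\tilde t}_q \le g^{\tilde t} + o(\epsilon, n)$ (which follows from $F_n^q \subset F_n$ and \eqref{eq:tilde g}) yield
\[
w\,|\alpha^{\tilde t}_q - \alpha^{\tilde t}|^2 \le P(\tilde t) - P_q(\tilde t) + o(\epsilon, n).
\]
Now I bound the right-hand side in two pieces. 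Using $P(s_0)=0$ together with Lemma \ref{lem:com} and the estimate on $|a(s_0)-a(\tilde t)|$ from the previous paragraph, $P(\tilde t) \le L|a(\tilde t) - a(s_0)| = O(-P_q(s_0))$. For the reduced pressure, Proposition \ref{prop:lower} supplies $\tilde t - t_q \le O(-P_q(s_0)^{3/2})$, and in the case $\tilde t \ge t_q$, Lemma \ref{lem:MVT} propagates this into $-P_q(\tilde t) \le O(-P_q(s_0)^{3/2})$; when $\tilde t < t_q$ the quantity $-P_q(\tilde t)$ is itself non-positive by monotonicity of $P_q$. Adding, $P(\tilde t) - P_q(\tilde t) = O(-P_q(s_0))$, and extracting the square root completes the estimate for the third term.

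The main obstacle is exactly this third term: controlling $P(\tilde t) - P_q(\tilde t)$ by $O(-P_q(s_0))$ using only the one-sided bound of Proposition \ref{prop:lower}, since the matching upper bound on $t_q - \tilde t$ is not yet available (indeed, Lemma \ref{lem:delta3} is the ingredient for establishing it in the next step). What makes the argument go through is that the full and reduced pressures at $\tilde t$ are controllable by \emph{complementary} tools already in hand: an $s$-Lipschitz estimate for $P$ (Lemma \ref{lem:com}) together with the $O(-P_q(s_0))$ size of $\tilde t - s_0$ for the first; and a $t$-Lipschitz estimate for $P_q$ (Lemma \ref{lem:MVT}) together with the one-sided bound $\tilde t - t_q \le O(-P_q(s_0)^{3/2})$ for the second.
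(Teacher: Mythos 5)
Your proof is correct, and the first two terms of your triangle-inequality decomposition are handled exactly as in the paper. The difference lies in how you control the third term $|\alpha^{\tilde t} - \alpha^{\tilde t}_q|$. You run the argument of Lemma \ref{lem:delta0} at level $\tilde t$, arriving at $w|\alpha^{\tilde t}_q - \alpha^{\tilde t}|^2 \le P(\tilde t) - P_q(\tilde t) + o(\epsilon,n)$, and then you must bound both $P(\tilde t)$ and $-P_q(\tilde t)$ separately. For the second of these you bring in Proposition \ref{prop:lower} together with Lemma \ref{lem:MVT}, correctly noting that the one-sided bound on $\tilde t - t_q$ suffices once one observes that the other case yields $-P_q(\tilde t)\le 0$ for free. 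The paper avoids estimating $P_q(\tilde t)$ altogether: starting from the same concavity inequality, it replaces $g^{\tilde t}_q(\alpha^{\tilde t}_q)$ by the smaller value $g^{\tilde t}_q(\alpha^{s_0}_q)$ (using maximality of $\alpha^{\tilde t}_q$), and then expands $g^{\tilde t}_q(\alpha^{s_0}_q) = g^{s_0}_q(\alpha^{s_0}_q) + \inner{a(\tilde t)-a(s_0)}{\alpha^{s_0}_q} + o(\epsilon,n)$ linearly in $a(\cdot)$, reducing everything to $P_q(s_0)$ and $O(|a(\tilde t)-a(s_0)|)$ directly. The payoff of the paper's route is that Lemma \ref{lem:delta3} then depends only on the concavity and Lipschitz lemmas (plus Lemma \ref{lem:delta0} and Remark \ref{rem:astoP}), keeping the dependency chain among Lemma \ref{lem:MVT}, Propositions \ref{prop:lower}, \ref{prop:upper}, and Lemma \ref{lem:delta3} as short as possible; yours is slightly heavier but logically sound, since both Lemma \ref{lem:MVT} and Proposition \ref{prop:lower} are established before Lemma \ref{lem:delta3} and neither relies on it.
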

\begin{proof}
Notice first that by Lemma \ref{lem:concave},
\begin{equation}\label{eqn:ss1}
|\alpha^{\tilde t} - \alpha^{s_0}|\le \frac{|a(\tilde t) - a(s_0)|}{2w},
\end{equation}
where $w=w_{s_0}$. Using Lemma \ref{lem:com} and Remark \ref{rem:astoP}
\begin{align*}
g^{\tilde t}(\alpha^{\tilde t})&\le g^{s_0}(\alpha^{s_0})+L|a(\tilde t) - a(s_0)|\\
&=L|a(\tilde t) -a(s_0) |+ o(\epsilon, n)\\
&=O(-P_q(s_0)) + o(\epsilon, n).
\end{align*}
We now obtain from \eqref{eqn:concave} and the definition of $g^s_q$
\begin{align*}
w|\alpha^{\tilde t} - \alpha^{\tilde t}_q|^2&\le g^{\tilde t}(\alpha^{\tilde t}) - g^{\tilde t} (\alpha^{\tilde t}_q)\\
&\le g^{\tilde t}(\alpha^{\tilde t}) - g^{\tilde t}_q(\alpha^{s_0}_q)\\
&= g^{\tilde t}(\alpha^{\tilde t}) - g^{s_0}_q(\alpha^{s_0}_q) - \inner{(a(\tilde t) - a(s_0))}{\alpha^{s_0}_q}+o(\epsilon, n).
\end{align*}
These calculations combined amount to
\begin{equation}\label{eqn:delta2}
|\alpha^{\tilde t} - \alpha^{\tilde t}_q|\le O((-P_q(s_0))^{1/2})+o(\epsilon, n).
\end{equation}
Finally, through Lemma \ref{lem:delta0}, \eqref{eqn:ss1}and \eqref{eqn:delta2},
\[
|\alpha^{s_0}_q - \alpha^{\tilde t}_q|\le |\alpha^{s_0}_q - \alpha^{s_0}|+ |\alpha^{s_0} - \alpha^{\tilde t}|+|\alpha^{\tilde t} - \alpha^{\tilde t}_q|\le O((-P_q(s_0))^{1/2})+o(\epsilon, n).
\]

\end{proof}

\begin{proposition}\label{prop:upper}
The quantity $t_q - \tilde t$ has an upper bound in terms of $P_q(s_0)$, namely
\[
t_q - \tilde t\le O((-P_q(s_0))^{3/2}).
\]
\end{proposition}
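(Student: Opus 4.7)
The plan is to mirror the strategy used in Proposition \ref{prop:lower}, but with the roles of $\alpha^{s_0}_q$ and $\alpha^{\tilde t}_q$ swapped: there we lower-bounded $P_q(\tilde t)$ by evaluating $g^{\tilde t}_q$ at the suboptimal point $\alpha^{s_0}_q$; here I will upper-bound $P_q(\tilde t)$ by using that $\alpha^{\tilde t}_q$ itself is a maximizer for $g^{\tilde t}_q$ but only suboptimal for $g^{s_0}_q$. Concretely, by Lemma \ref{lem:aprox},
\[
P_q(\tilde t) = g^{\tilde t}_q(\alpha^{\tilde t}_q) + o(\epsilon, n) = g^{s_0}_q(\alpha^{\tilde t}_q) + \inner{a(\tilde t) - a(s_0)}{\alpha^{\tilde t}_q} + o(\epsilon, n).
\]
Since $\alpha^{s_0}_q$ maximizes $g^{s_0}_q$, we have $g^{s_0}_q(\alpha^{\tilde t}_q) \le g^{s_0}_q(\alpha^{s_0}_q) = P_q(s_0) + o(\epsilon, n)$. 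Plugging this in and splitting the inner product around $\alpha^{s_0}$, we get
\[
P_q(\tilde t) \le P_q(s_0) + \inner{a(\tilde t) - a(s_0)}{\alpha^{s_0}} + \inner{a(\tilde t) - a(s_0)}{\alpha^{\tilde t}_q - \alpha^{s_0}} + o(\epsilon, n).
\]

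Next I would use the defining property of $\tilde t$ to kill the leading-order term: $P_q(s_0) + \inner{a(\tilde t) - a(s_0)}{\alpha^{s_0}} = 0$. This reduces matters to bounding the cross term $\inner{a(\tilde t) - a(s_0)}{\alpha^{\tilde t}_q - \alpha^{s_0}}$. The first factor has size $|a(\tilde t) - a(s_0)| = O(-P_q(s_0))$ by Remark \ref{rem:astoP} (since $\tilde t - s_0$ is proportional to $-P_q(s_0)$ and $a$ is linear in $s$ on each integer interval). The second factor is controlled by the triangle inequality $|\alpha^{\tilde t}_q - \alpha^{s_0}| \le |\alpha^{\tilde t}_q - \alpha^{s_0}_q| + |\alpha^{s_0}_q - \alpha^{s_0}|$ together with Lemmas \ref{lem:delta3} and \ref{lem:delta0}, which give $|\alpha^{\tilde t}_q - \alpha^{s_0}| \le O((-P_q(s_0))^{1/2}) + o(\epsilon, n)$. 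Multiplying, we obtain
\[
P_q(\tilde t) \le O((-P_q(s_0))^{3/2}) + o(\epsilon, n).
\]

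To conclude, I apply Lemma \ref{lem:MVT}, which says $|t_q - \tilde t| \le C |P_q(\tilde t)|$, to turn this bound on $P_q(\tilde t)$ into a bound on $t_q - \tilde t$ (note the sign: because $P_q$ is decreasing near its zero, $t_q \ge \tilde t$ exactly when $P_q(\tilde t) \ge 0$, so we are indeed bounding $t_q - \tilde t$ from above by the nonnegative quantity $O((-P_q(s_0))^{3/2})$). Finally, letting $\epsilon \to 0$ and $n \to \infty$ eliminates the $o(\epsilon, n)$ error term, completing the proof.

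The main obstacle is the cross-term bound: one has to resist the temptation to compare $\alpha^{\tilde t}_q$ with $\alpha^{\tilde t}$ directly via concavity (as in Lemma \ref{lem:delta3}), because what actually appears in the expression for $P_q(\tilde t)$ is the difference against $\alpha^{s_0}$. The correct path is to route through $\alpha^{s_0}_q$ (controlled by Lemma \ref{lem:delta3}) and then from $\alpha^{s_0}_q$ to $\alpha^{s_0}$ (controlled by Lemma \ref{lem:delta0}), so that the two square-root contributions combine cleanly to yield a single $O((-P_q(s_0))^{1/2})$ factor, which, multiplied by the linear factor from $a(\tilde t) - a(s_0)$, produces the desired $3/2$ power.
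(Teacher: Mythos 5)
Your proof is correct and follows essentially the same route as the paper: you expand $P_q(\tilde t)$ via $g^{\tilde t}_q(\alpha^{\tilde t}_q)$, use maximality of $\alpha^{s_0}_q$ for $g^{s_0}_q$, kill the leading term with the defining equation of $\tilde t$, and bound the cross term via Lemmas~\ref{lem:delta3} and~\ref{lem:delta0} before applying Lemma~\ref{lem:MVT}. The only cosmetic difference is that you collapse the paper's two difference terms $(\alpha^{s_0}_q - \alpha^{s_0})$ and $(\alpha^{\tilde t}_q - \alpha^{s_0}_q)$ into a single $(\alpha^{\tilde t}_q - \alpha^{s_0})$ and then re-split by triangle inequality, which is the same estimate.
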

\begin{proof}
Using Lemma \ref{lem:aprox} and the definition of $\tilde t$, Remark \ref{rem:astoP} and Lemmas \ref{lem:delta3} and \ref{lem:delta0}
\begin{align*}
P_q(\tilde t)&=g_q^{\tilde t}(\alpha^{\tilde t}_q)+o(\epsilon, n)=\inner{a(\tilde t) - a(s_0)}{\alpha^{\tilde t}_q} + g^{s_0}_q(\alpha^{\tilde t}_q)+o(\epsilon, n)\\
&\le \inner{a(\tilde t) - a(s_0)}{\alpha^{\tilde t}_q} + g^{s_0}_q(\alpha^{s_0}_q)+o(\epsilon, n)\\
&=\inner{a(\tilde t) - a(s_0)}{\alpha^{s_0}}+\inner{a(\tilde t) - a(s_0)}{(\alpha^{s_0}_q - \alpha^{s_0})} \\
&\quad  \quad+\inner{a(\tilde t) - a(s_0)}{(\alpha^{\tilde t}_q - \alpha^{s_0}_q)} +P_q(s_0)+o(\epsilon, n)\\
&\le O((-P_q(s_0))^{3/2})+o(\epsilon, n).
\end{align*}
Finally, apply Lemma \ref{lem:MVT} and let $\epsilon \to 0$ and $n\to \infty$.
\end{proof}

We are now ready to formulate the main theorem (in the diagonal case). Please recall the notation introduced in the beginning of the section. Denote
\begin{equation}\label{eq:z}
\begin{aligned}
Z(D) &= -\left(\lim_{h\searrow 0} \frac 1h (P_D(s_0) - P_D(s_0-h))\right)^{-1} \\
&= \frac{-1}{\inner{(\log a^1_{e_{\ceil {s_0}}}, \dots, \log a^k_{e_{\ceil {s_0}}})}{\alpha^{s_0}}}.
\end{aligned}
\end{equation}
\begin{theorem} \label{thm:diagonal}
Let $\Lambda$ be a self-affine set corresponding to an iterated function system $\{A_1+v_1, \dots, A_k+v_k\}$ with $\|A_i\|<\tfrac 12$ for all $i=1, \dots, k$, and let $\bq\in \Sigma$. Assume that all the matrices $A_i$ are diagonal. Then, for Lebesgue almost all $(v_1, \dots, v_k)\in \R^{dk}$,
\[
\dim_H \Lambda_q = \max_{D\in S_d} t_q^D.
\]
Moreover, if $\bq$ is not periodic then
\begin{equation} \label{eqn:res1}
\lim_{q\to \infty} \frac {\dim\Lambda - \dim \Lambda_q} {\min_{D\in S_d; P_D(s_0)=0} Z(D) \mu_D([\bq|_q])} =1,
\end{equation}
while if $\bq$ has period $\ell$ then
\begin{equation} \label{eqn:res2}
\lim_{q\to \infty} \frac {\dim\Lambda - \dim \Lambda_q} {\min_{D\in S_d; P_D(s_0)=0} Z(D) (1-\mu_D([\bq|_\ell])) \mu_D([\bq|_q])} =1.
\end{equation}

\end{theorem}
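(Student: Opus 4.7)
The plan is to combine the permutation decomposition $\dim \Lambda_q = \max_D t_q^D$ with the estimates on $s_0 - t_q^D$ developed in Propositions~\ref{prop:lower} and~\ref{prop:upper}, and then extract the precise asymptotics via the $D$-variant of Lemma~\ref{lem:onlypressure} noted in the paragraph preceding Lemma~\ref{lem:approximation}. First I would verify the identity $\dim \Lambda_q = \max_D t_q^D$: each $P_{D,q}(s)$ is convex in $s$ and strictly decreasing (all diagonal entries $a_j^i$ lie in $(0,1)$), so $P_q(s) = \max_D P_{D,q}(s)$ vanishes at exactly $s = \max_D t_q^D$; combined with Theorem~\ref{thm:dimform} this gives the claim, and hence
\[
\dim \Lambda - \dim \Lambda_q \;=\; s_0 - \max_D t_q^D \;=\; \min_D (s_0 - t_q^D).
\]

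For each admissible $D$ (i.e.\ $P_D(s_0) = 0$), Remark~\ref{rem:astoP} gives $s_0 - \tilde t = Z(D) \cdot (-P_{D,q}(s_0))$, while Propositions~\ref{prop:lower}--\ref{prop:upper} give $|t_q^D - \tilde t| = O\bigl((-P_{D,q}(s_0))^{3/2}\bigr)$; combining these,
\[
s_0 - t_q^D \;=\; Z(D) \cdot (-P_{D,q}(s_0)) + O\bigl((-P_{D,q}(s_0))^{3/2}\bigr).
\]
The $D$-variant of Lemma~\ref{lem:onlypressure} (valid verbatim because $\varphi_D^{s_0}$ is multiplicative, and $\mu_D$ is both $s_0$-semiconformal for $\varphi_D^s$ and Gibbs for $\varphi_D^{s_0}$) then yields
\[
\frac{-P_{D,q}(s_0)}{\mu_D([\bq|_q])} \longrightarrow \begin{cases} 1 & \text{if } \bq \text{ is not periodic,} \\ 1 - \mu_D([\bq|_\ell]) & \text{if } \bq \text{ has period } \ell, \end{cases}
\]
using that the Gibbs potential evaluated on $\bq|_\ell$ is precisely $\varphi_D^{s_0}(A_{\bq|_\ell}) = \mu_D([\bq|_\ell])$, by multiplicativity and the fact that $\mu_D$ is Bernoulli with weights $\varphi_D^{s_0}(A_i)$. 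Since $(-P_{D,q}(s_0))^{3/2}/\mu_D([\bq|_q]) = O\bigl((-P_{D,q}(s_0))^{1/2}\bigr) \to 0$, this gives
\[
\frac{s_0 - t_q^D}{Z(D) \cdot \kappa_D \cdot \mu_D([\bq|_q])} \longrightarrow 1
\]
for each admissible $D$, where $\kappa_D = 1$ (non-periodic case) or $\kappa_D = 1 - \mu_D([\bq|_\ell])$ (periodic case).

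For non-admissible $D$ (i.e.\ $P_D(s_0) < 0$), $t_q^D$ converges to the zero $t^D < s_0$ of $P_D$, so $s_0 - t_q^D$ stays uniformly bounded away from zero; as each $\mu_D([\bq|_q])$ tends to zero for admissible $D$, such non-admissible $D$ eventually drop out of $\min_D (s_0 - t_q^D)$. Since $\{D \in S_d : P_D(s_0) = 0\}$ is finite and each ratio above tends to $1$, one may interchange the minimum with the limit to obtain
\[
\min_{D} (s_0 - t_q^D) \;=\; \bigl(1 + o(1)\bigr) \cdot \min_{D : P_D(s_0) = 0} Z(D) \kappa_D \mu_D([\bq|_q]),
\]
which is (\ref{eqn:res1})--(\ref{eqn:res2}).

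The main obstacle is uniformity of the error $O((-P_{D,q}(s_0))^{3/2})$ across the permutations $D$: the constants in Propositions~\ref{prop:lower}--\ref{prop:upper} depend on $D$ (through $w_{s_0}$, $L$ and $c,C$ from Lemma~\ref{lem:MVT}), so one must check that a uniform bound holds; this is painless since $S_d$ is finite, but without it the interchange of $\min$ with $\lim$ would be unjustified. A secondary subtlety is the periodic case: one must verify that the Gibbs potential $\psi$ in the $D$-version of Lemma~\ref{lem:onlypressure} is indeed the locally constant $\psi(\bi) = \varphi_D^{s_0}(A_{i_1})$, so that $\psi(\bq|_\ell)$ coincides with the Bernoulli mass $\mu_D([\bq|_\ell])$ appearing in the statement.
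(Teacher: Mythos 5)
Your proposal is correct and follows essentially the same route as the paper's proof: you combine Remark~\ref{rem:astoP} and Propositions~\ref{prop:lower}--\ref{prop:upper} to get $s_0 - t_q^D = Z(D)\,(-P_{D,q}(s_0)) + O((-P_{D,q}(s_0))^{3/2})$ for each admissible $D$, use the $D$-variant of Lemma~\ref{lem:onlypressure} to identify $-P_{D,q}(s_0)/\mu_D([\bq|_q]) \to \kappa_D$, and then take the minimum over $D$. You spell out two points the paper's terse proof leaves implicit --- that non-admissible $D$ eventually drop out of $\min_D(s_0 - t_q^D)$, and that interchanging $\min$ with the limit is licensed by the finiteness of $S_d$ --- which makes your write-up a bit more careful but not a genuinely different argument.
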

\begin{proof}
For a fixed $D$ the limit 
\[
\lim_{q\to \infty}\frac{s_0 - t_q^D}{Z(D)\mu_D([\bq|_q])}
\]
exists: the value comes from Lemma \ref{lem:onlypressure}, where the upper bound is from Proposition \ref{prop:lower} and Remark \ref{rem:astoP}, and the lower bound is obtained analogously, but using Proposition \ref{prop:upper}. To obtain the theorem we pass with $q$ to $\infty$ and for each $q$ use the $D$ for which $t_q^D$ is maximal.
\end{proof}

\begin{remark}\label{rem:exs}
Observe that in general situation we cannot write the usual formula `the dimension deficit divided by the measure of the hole converges to the left derivative of the pressure'. The reason: Consider an iterated function system as in \cite[Example 6.2]{KaenmakiVilppolainen10} with linear parts, say,
\[
\left ( \begin{matrix}
  4/9 & 0 \\
  0 & 1/9 \\
  \end{matrix}\right )\quad\textrm{and} \quad \left ( \begin{matrix}
  1/9 & 0 \\
  0 & 4/9 \\
  \end{matrix}\right ).
\]
Then $s_0=1/2$ and the collection of permutations which satisfies $P_D(s_0)=0$ consists of two elements, and the corresponding K\"aenm\"aki measures are the Bernoulli measures with weights $(1/3, 2/3)$ and $(2/3, 1/3)$, respectively. Now choose a very rapidly increasing sequence of natural numbers $(m_j)$ and set $\bq=(1^{m_1}2^{m_2}1^{m_3}...)$. Then the limits in \eqref{eqn:res1} and \eqref{eqn:res2} do not exist for either fixed $D$.
\end{remark}

\begin{remark}\label{rem:exs2}
However, the following shows that sometimes we can: Consider the case that $A_i$ are diagonal for all $i=1, \dots, k$, and, furthermore, the diagonal elements are in the same order in all of the matrices. Then the K\"aenm\"aki measure $\mu$ for the value of $s$ for which $P(s)=0$ is a Bernoulli measure with weights $(\phi^s(A_1), \dots, \phi^s(A_k))$ (by Lemma \ref{lem:measure}), and one can check that in Theorem \ref{thm:diagonal}, $\mu$ is the maximizing measure (or one of them, if there are many). Hence we obtain the statement of Theorem \ref{thm:main}
\[
\lim_{q\to \infty}\frac{\dim \Lambda - \dim \Lambda_q}{\mu([\bq|_q])}=  \begin{cases}
   \frac{-1}{\inner{(\log a^1_{\ceil {s_0}}, \dots, \log a^k_{\ceil {s_0}})}{\alpha^{s_0}}}, & \bq\textrm{ is not periodic }\\
    \frac{-1 + \mu([\bq|_\ell])}{\inner{(\log a^1_{\ceil {s_0}}, \dots, \log a^k_{\ceil {s_0}})}{\alpha^{s_0}}}, & \bq \textrm{ is periodic with period }\ell.
  \end{cases}
\]
\end{remark}

\begin{remark}\label{rem:exs3}
Fix some $\beta<\alpha<1/2$, and let $\gamma<\alpha, \beta$. Consider the iterated function system which has as the linear parts of the mappings
\[
A=\left (\begin{matrix}
  \alpha & \gamma \\
  0 & \beta \\
  \end{matrix}\right )\quad\textrm{and} \quad B= \left ( \begin{matrix}
  \beta & \gamma \\
  0 & \alpha \\
  \end{matrix}\right ).
\]
Then for $s<1$, $\phi^s(A^nB^n)$ grows like $\alpha^{2ns}$, whereas $\phi^s(B^nA^n)$ grows like $\alpha^{ns}\beta^{ns}$ so that there is an exponential gap between the values, due to the off-diagonal element. Our proof of Theorem \ref{thm:diagonal} depends on the exact connection between the singular value function and the Bernoulli measures given by the diagonal elements. Hence, despite the fact that according \cite[Theorem 2.6]{FalconerMiao07} the pressure only depends on the diagonal elements of $A$ and $B$, our proof does not easily extend to the upper triangular case.
\end{remark}

\section{The case of strong-Gibbs measures (Theorem \ref{thm:homogeneous})}\label{sec:abstract}

In this section, recall the assumptions that $s_0$ is such that $P(s_0)=0$, that the K\"aenm\"aki measure $\mu$ at $s_0$ is a strong-Gibbs measure, and given $q$, denote by $t_q$ the value where $P_q(t_q)=0$. Furthermore, we assume that the derivative $P'(s_0)$ exists. We do not assume that $P_q$ is differentiable, but since it is convex we know that left and right derivatives exist at all points. We know that $P_q$ is a convex function not
larger than $P$, which is also convex.

Let us begin with a simple lemma. Here by $f'(x-0)$ and $f'(x+0)$ we denote the left and right derivatives of $f$ at $x$.

\begin{lemma} \label{lem:convconv}
Let $P$ be a convex function. Let $P_q$ be a sequence of convex functions such that $P_q \leq P$ but $\lim_q P_q(s_0) = P(s_0)$. Then
\[
P'(s_0-0) \leq \lim_{q\to \infty} P_q'(s_0-0) \leq \lim_{q\to \infty} P_q'(s_0+0) \leq P'(s_0+0).
\]
\end{lemma}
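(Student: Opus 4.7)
The plan is to derive the two outer inequalities from the hypothesis $P_q \le P$ and $P_q(s_0)\to P(s_0)$ combined with the monotonicity of secant slopes for convex functions, and to get the middle inequality for free from the convexity of each $P_q$. Because the limits in the statement need not exist a priori, I would interpret them as $\liminf$ on the left-derivative sides and $\limsup$ on the right-derivative sides (and remark that the chain of inequalities then forces every $\liminf$/$\limsup$ to agree in the differentiable case used in Theorem \ref{thm:homogeneous}).

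The first step is to recall that for any convex function $f$ and $h>0$,
\[
\frac{f(x)-f(x-h)}{h} \;\le\; f'(x-0) \;\le\; f'(x+0) \;\le\; \frac{f(x+h)-f(x)}{h},
\]
with the outer quotients increasing to the respective one-sided derivatives as $h\searrow 0$. This gives the middle inequality $P_q'(s_0-0)\le P_q'(s_0+0)$ for each $q$, and therefore for the limits.

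For the lower outer bound, I would apply this to $P_q$ and $x=s_0$, getting
\[
\frac{P_q(s_0)-P_q(s_0-h)}{h}\;\le\;P_q'(s_0-0).
\]
Using $P_q(s_0-h)\le P(s_0-h)$ and $P_q(s_0)\to P(s_0)$, the $\liminf$ of the left-hand side as $q\to\infty$ is at least $\bigl(P(s_0)-P(s_0-h)\bigr)/h$. Hence
\[
\liminf_{q\to\infty} P_q'(s_0-0) \;\ge\; \frac{P(s_0)-P(s_0-h)}{h}.
\]
Letting $h\searrow 0$ and invoking the convexity of $P$ (so the right-hand side increases to $P'(s_0-0)$) yields the first inequality. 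The symmetric argument with the right quotient $\bigl(P_q(s_0+h)-P_q(s_0)\bigr)/h \ge P_q'(s_0+0)$, together with $P_q(s_0+h)\le P(s_0+h)$, gives $\limsup_q P_q'(s_0+0) \le P'(s_0+0)$.

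The only subtle point is bookkeeping with $\liminf$ versus $\lim$: the two outer estimates above control $\liminf_q P_q'(s_0-0)$ from below and $\limsup_q P_q'(s_0+0)$ from above, while the pointwise convex-function inequality $P_q'(s_0-0)\le P_q'(s_0+0)$ bridges them. I expect no serious obstacle; the only thing to watch is to state the conclusion in the $\liminf$/$\limsup$ form so that no existence-of-limit claim is implicitly made, with the note that the assumed differentiability of $P$ at $s_0$ in Theorem \ref{thm:homogeneous} collapses the whole chain to a genuine limit equal to $P'(s_0)$.
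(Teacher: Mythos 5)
Your proof is correct and rests on the same core facts the paper uses (monotonicity of secant slopes for convex functions, the bound $P_q \le P$, and $P_q(s_0)\to P(s_0)$), but you argue directly rather than by contradiction. The paper assumes $P_q'(s_0-0) < P'(s_0-0)-\varepsilon$ along a subsequence, feeds this into the secant bound to force $P_q(s_0-\delta) > P(s_0-\delta)$ for small $\delta$ and large $q$, and reaches a contradiction; you instead fix $h>0$, use $\frac{P_q(s_0)-P_q(s_0-h)}{h}\le P_q'(s_0-0)$ together with $P_q(s_0-h)\le P(s_0-h)$ to bound $\liminf_q P_q'(s_0-0)$ below by the difference quotient of $P$, and then let $h\searrow 0$. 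Your version is the contrapositive of the paper's and is arguably cleaner: it avoids introducing the auxiliary $\delta,\varepsilon$ bookkeeping. Your $\liminf/\limsup$ caveat is a genuine improvement in precision --- the paper writes $\lim$ without justification, and you correctly point out that the chain of inequalities together with the differentiability of $P$ at $s_0$ (the case actually used in Theorem \ref{thm:homogeneous}) is what makes the limits exist.
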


\begin{proof}
It is enough to prove the first inequality: the second is immediate from convexity, and the third can be proved analogously to the first. Assume to the contrary, that there exists $\varepsilon>0$, and we can choose a subsequence of convex functions $P_q\leq P$ with $P_q(s_0)\to P(s_0)$, such that
\[
P_q'(s_0-0) < P'(s_0-0) - \varepsilon.
\]
As $P_q$ is convex, $P_q'(s-0) \leq P_q'(s_0-0)$ for all $s<s_0$. On the other hand,
\[
P'(s_0-0) = \lim_{s\nearrow s_0} P'(s-0),
\]
hence there exists $\delta>0$ depending only on $P$ such that
\[
P'(s-0) > P'(s_0-0) - \varepsilon/2
\]
for all $s> s_0-\delta$. Hence, decreasing $\delta>0$ further if necessary
\begin{align*}
P_q(s_0-\delta) &\leq P_q(s_0) - \delta P_q'(s_0-0) \\
&\leq \left[P(s_0) - \delta P'(s_0-\delta-0)\right] + P_q(s_0) - P(s_0) + \delta\varepsilon/2 \\
&\leq P(s_0-\delta) + P_q(s_0) - P(s_0) + \delta\varepsilon/4
\end{align*}
Therefore, choosing $q$ so large that $P_q(s_0) > P(s_0) - \delta\varepsilon/4$, we obtain $P_q(s_0-\delta) > P(s_0-\delta)$, which is a contradiction.
\end{proof}

Relying on Lemma \ref{lem:onlypressure}, we wish to understand $s_0 - t_q$ in terms of $P_q(s_0)$. That is the content of the following lemma.

\begin{lemma} \label{lem:convconv2}
Let $P$ be a convex function. Let $P_q$ be a sequence of convex functions such that $P_q \leq P$, $\lim_q P_q(s_0) = P(s_0)=0$, and $\lim_q P_q'(s_0-0) = P'(s_0-0)$. Then

\[
\lim_{q\to \infty} \frac  {-P_q(s_0)} {s_0-t_q} = -P'(s_0-0).
\]
\end{lemma}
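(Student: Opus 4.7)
The plan is to squeeze $(-P_q(s_0))/(s_0-t_q)$ between the two one-sided derivatives of $P_q$ at $s_0$ and at $t_q$ respectively, using the classical secant bound for convex functions. Since $P_q$ is convex and $P_q(t_q)=0$,
\[
P_q'(t_q+0)\;\leq\;\frac{P_q(s_0)-P_q(t_q)}{s_0-t_q}\;=\;\frac{P_q(s_0)}{s_0-t_q}\;\leq\; P_q'(s_0-0),
\]
whenever $t_q<s_0$. Rearranging the right inequality together with $P_q(s_0)\to 0$ and the hypothesis $P_q'(s_0-0)\to P'(s_0-0)$ (which in the intended application is strictly negative) first shows that $s_0-t_q\to 0^{+}$, so all of the subsequent estimates sit in a vanishing neighbourhood of $s_0$.

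Multiplying the right half of the secant inequality by $-1$ immediately gives
\[
\frac{-P_q(s_0)}{s_0-t_q}\;\geq\;-P_q'(s_0-0),
\]
and passing to $\liminf_q$ yields $\liminf_q (-P_q(s_0))/(s_0-t_q)\geq -P'(s_0-0)$ by hypothesis. This settles one direction of the limit.

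For the matching upper bound I need $\liminf_q P_q'(t_q+0)\geq P'(s_0-0)$, which via the left half of the secant estimate will give $\limsup_q(-P_q(s_0))/(s_0-t_q)\leq -P'(s_0-0)$. To control $P_q'(t_q+0)$ using only the hypothesis $P_q\leq P$, fix $s<s_0$ and take $q$ large enough that $s<t_q$. The subgradient inequality at $t_q$ with slope $P_q'(t_q+0)$ reads $P_q(s)\geq P_q'(t_q+0)(s-t_q)$, which combined with $P_q(s)\leq P(s)$ and division by the negative quantity $s-t_q$ yields
\[
P_q'(t_q+0)\;\geq\;\frac{P(s)}{s-t_q}\;=\;-\frac{P(s)}{t_q-s}.
\]
Using $t_q\to s_0$ and passing to the liminf in $q$ first, then letting $s\nearrow s_0$, one recovers $\liminf_q P_q'(t_q+0)\geq (P(s_0)-P(s))/(s_0-s)\to P'(s_0-0)$.

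The substantive step is precisely this second half: the hypotheses do not directly control $P_q'(t_q+0)$, only $P_q'(s_0-0)$. The key move is to use the global domination $P_q\leq P$ together with the subgradient inequality at $t_q$ to trade an unknown quantity at $t_q$ for a known quantity at a fixed auxiliary $s<s_0$, and only afterwards let $s\nearrow s_0$ to realise $P'(s_0-0)$ as a limit of secant slopes of $P$. Combining the two halves closes the sandwich and gives the claimed limit.
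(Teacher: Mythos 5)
Your proof is correct and captures the same essential mechanism as the paper's: the easy half comes from the secant/monotone-derivative bound $P_q(s_0)/(s_0-t_q)\le P_q'(s_0-0)$ together with the hypothesis $P_q'(s_0-0)\to P'(s_0-0)$, while the substantive half uses the subgradient inequality at $t_q$ combined with $P_q\le P$ to transfer information to a fixed auxiliary point below $s_0$, and then exploits convexity of $P$ as the auxiliary point tends to $s_0$. The only difference is presentational: you estimate $P_q'(t_q+0)$ directly and let $s\nearrow s_0$ after $q\to\infty$, whereas the paper runs a contradiction argument with an explicit $\varepsilon$ and $\delta$; incidentally, your direct write-up avoids the sign slips that appear in the paper's version of the contradiction step.
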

\begin{proof}
We have

\[
P_q(s_0) = \int_{t_q}^{s_0} P_q'(s-0) ds.
\]

As $P_q'(s-0) \leq P_q'(s_0-0)$ for all $s<s_0$, the upper bound follows immediately.

For the lower bound, assume that it fails: for a subsequence of $P_q$ we have

\[
\frac  {-P_q(s_0)} {s_0-t_q} < -P'(s_0-0) - \varepsilon.
\]

Then, necessarily,
\[
P_q'(t_q-0) < P'(s_0-0) - \varepsilon.
\]
Hence, for all $s<t_q$ we have

\begin{equation} \label{eqn:convconv1}
P_q(s) \geq -(t_q-s) (P'(s_0-0)-\varepsilon).
\end{equation}

On the other hand, like in the previous lemma, we can find $\delta>0$ not depending on $q$ such that

\[
P'(s-0) > P'(s_0-0) - \varepsilon/2
\]
for all $s> s_0-\delta$. Thus,

\begin{equation} \label{eqn:convconv2}
P(s_0-\delta) \leq - \delta (P'(s_0-0)-\varepsilon/2).
\end{equation}

Comparing \eqref{eqn:convconv1} with \eqref{eqn:convconv2} we see that choosing $q$ such that $t_q$ is so close to $s_0$ that
\[
(s_0-t_q - \delta) (P'(s_0-0)-\varepsilon) > - \delta (P'(s_0-0) -\varepsilon/2),
\]
that is
\[
s_0-t_q < - \frac {\delta \varepsilon} {2(P'(s_0-0)-\varepsilon)},
\]
then we get $P_q(s_0-\delta) > P(s_0-\delta)$, a contradiction.
\end{proof}

Under the assumption that $P'(s_0)$ exists, by Lemma \ref{lem:convconv} we can apply Lemma \ref{lem:convconv2}. The statement of Theorem \ref{thm:homogeneous} is now an immediate corollary of Lemmas \ref{lem:convconv2} and \ref{lem:onlypressure}, and Theorem \ref{thm:ferguson pollicott}.

\begin{remark}
The assumptions of Theorem \ref{thm:homogeneous} may look difficult to satisfy, but there are at least two classes of systems for which the K\"aenm\"aki measure is strong-Gibbs.
\begin{itemize}
\item[(1)]Homogeneous case: Assume that all the matrices $A_i$ are powers of one matrix $A$. To demonstrate our result, consider the simplest case where $A_i=A$ for all $i$. Then the K\"aenm\"aki measure is a Bernoulli measure with equal weights by Lemma \ref{lem:measure} so that, in particular, it is strong-Gibbs. Writing $\sigma_1, \dots, \sigma_d$ for the singular values of $A$ and assuming that the dimension $s_0$ of $\Lambda$ is not an integer, one can obtain
\[
P'(s_0)=\log \sigma_{\ceil{s_0}}.
\]
\item[(2)]Dominated case: Assume that $d=2$ and the cocycle generated by matrices $A_i$ is dominated, that is, there exist $C>0$,  $0<\tau<1$ such that for all $n$ and $\bi\in \Sigma_n$, 
\[
\frac{\det(A_\bi)}{|A_\bi|^2}\le C\tau^{n}.
\]
It is proved in \cite{BaranyKaenmakiMorris} that also in this case the K\"aenm\"aki measure satisfies the strong-Gibbs assumption, and if $s_0$ is not an integer then $P'(s_0)$ is well defined. The dominated cocycles are an open subset of $GL(2,\R)$-cocycles, we refer the reader to \cite{BaranyKaenmakiMorris} for the discussion.
\end{itemize}
For more on the $s$-semiconformality of K\"aenm\"aki measures, see \cite{KaenmakiVilppolainen10}.
\end{remark}

\begin{remark}
As one can see in Lemma \ref{lem:convconv2}, in both examples presented above the assertion of our theorem stays true for integer $s_0$ (with $P'(s_0)$ replaced by $P'(s_0-0)$). Indeed, while the singular value pressure is nondifferentiable at integer points because of nondifferentiability at those points of the definition of singular value function, the assumptions of Lemma \ref{lem:convconv2} are satisfied (for those examples) at integer points as well.
\end{remark}

\bibliographystyle{plain}
\bibliography{vaitbib}

\begin{thebibliography}{10}

\bibitem{BaranyKaenmakiMorris}
Bal{\'a}zs B{\'a}r{\'a}ny, Antti K{\"a}enm{\"a}ki, and Ian Morris.
\newblock Domination and thermodynamic formalism for planar matrix cocycles.
\newblock in preparation.

\bibitem{BaranyRams}
Bal{\'a}zs B{\'a}r{\'a}ny and Micha{\l} Rams.
\newblock Shrinking targets on {B}edford-{M}c{M}ullen carpets.
\newblock arXiv:1703.08564.

\bibitem{BruinDemersMelbourne10}
Henk Bruin, Mark Demers, and Ian Melbourne.
\newblock Existence and convergence properties of physical measures for certain
  dynamical systems with holes.
\newblock {\em Ergodic Theory Dynam. Systems}, 30(3):687--728, 2010.

\bibitem{BunimovichYurchenko11}
Leonid~A. Bunimovich and Alex Yurchenko.
\newblock Where to place a hole to achieve a maximal escape rate.
\newblock {\em Israel J. Math.}, 182:229--252, 2011.

\bibitem{ChernovMarkarianTroubetzkoy00}
N.~Chernov, R.~Markarian, and S.~Troubetzkoy.
\newblock Invariant measures for {A}nosov maps with small holes.
\newblock {\em Ergodic Theory Dynam. Systems}, 20(4):1007--1044, 2000.

\bibitem{ColletMartinezSchmitt94}
Pierre Collet, Servet Mart{\'\i}nez, and Bernard Schmitt.
\newblock The {Y}orke-{P}ianigiani measure and the asymptotic law on the limit
  {C}antor set of expanding systems.
\newblock {\em Nonlinearity}, 7(5):1437--1443, 1994.

\bibitem{Demers05}
Mark~F. Demers.
\newblock Markov extensions and conditionally invariant measures for certain
  logistic maps with small holes.
\newblock {\em Ergodic Theory Dynam. Systems}, 25(4):1139--1171, 2005.

\bibitem{Dettmann13}
Carl Dettmann.
\newblock Open circle maps: small hole asymptotics.
\newblock {\em Nonlinearity}, 26(1):307--317, 2013.

\bibitem{Falconer88}
Kenneth Falconer.
\newblock The {H}ausdorff dimension of self-affine fractals.
\newblock {\em Math. Proc. Cambridge Philos. Soc.}, 103(2):339--350, 1988.

\bibitem{FalconerMiao07}
Kenneth Falconer and Jun Miao.
\newblock Dimensions of self-affine fractals and multifractals generated by
  upper-triangular matrices.
\newblock {\em Fractals}, 15(3):289--299, 2007.

\bibitem{FergusonJordanRams15}
Andrew Ferguson, Thomas Jordan, and Micha{\l} Rams.
\newblock Dimension of self-affine sets with holes.
\newblock {\em Ann. Acad. Sci. Fenn. Math.}, 40(1):63--88, 2015.

\bibitem{FergusonPollicott12}
Andrew Ferguson and Mark Pollicott.
\newblock Escape rates for {G}ibbs measures.
\newblock {\em Ergodic Theory Dynam. Systems}, 32(3):961--988, 2012.

\bibitem{Hensley92}
Doug Hensley.
\newblock Continued fraction {C}antor sets, {H}ausdorff dimension, and
  functional analysis.
\newblock {\em J. Number Theory}, 40(3):336--358, 1992.

\bibitem{Kaenmaki04}
Antti K{\"a}enm{\"a}ki.
\newblock On natural invariant measures on generalised iterated function
  systems.
\newblock {\em Ann. Acad. Sci. Fenn. Math.}, 29(2):419--458, 2004.

\bibitem{KaenmakiVilppolainen10}
Antti K\"aenm\"aki and Markku Vilppolainen.
\newblock Dimension and measures on sub-self-affine sets.
\newblock {\em Monatsh. Math.}, 161(3):271--293, 2010.

\bibitem{KellerLiverani09}
Gerhard Keller and Carlangelo Liverani.
\newblock Rare events, escape rates and quasistationarity: some exact formulae.
\newblock {\em J. Stat. Phys.}, 135(3):519--534, 2009.

\bibitem{KoivusaloRamirez}
Henna Koivusalo and Felipe Ram{\'i}rez.
\newblock Recurrence to shrinking targets on typical self-affine fractals.
\newblock to appear in Proceedings of the Edinburgh Mathematical Society.
\newblock arXiv:1409.7593.

\bibitem{LiveraniMaume03}
Carlangelo Liverani and V\'eronique Maume-Deschamps.
\newblock Lasota-{Y}orke maps with holes: conditionally invariant probability
  measures and invariant probability measures on the survivor set.
\newblock {\em Ann. Inst. H. Poincar\'e Probab. Statist.}, 39(3):385--412,
  2003.

\bibitem{PianigianiYorke79}
Giulio Pianigiani and James~A. Yorke.
\newblock Expanding maps on sets which are almost invariant. {D}ecay and chaos.
\newblock {\em Trans. Amer. Math. Soc.}, 252:351--366, 1979.

\bibitem{Solomyak98}
Boris Solomyak.
\newblock Measure and dimension for some fractal families.
\newblock {\em Math. Proc. Cambridge Philos. Soc.}, 124(3):531--546, 1998.

\bibitem{Urbanski89}
M.~Urba\'nski.
\newblock Hausdorff dimension of invariant subsets for endomorphisms of the
  circle with an indifferent fixed point.
\newblock {\em J. London Math. Soc. (2)}, 40(1):158--170, 1989.

\end{thebibliography}

\end{document}